\documentclass[11pt]{article}
\usepackage[margin=1in]{geometry}
\usepackage{amssymb,amsfonts,amsmath,bbm,mathrsfs,stmaryrd,mathtools}
\usepackage{xcolor}
\usepackage{url}

\usepackage{enumerate}

\usepackage[backref=page]{hyperref}
\hypersetup{colorlinks,
             linkcolor=black!75!red,
             citecolor=blue,
             pdftitle={},
             pdfproducer={pdfLaTeX},
             pdfpagemode=None,
             bookmarksopen=true
             bookmarksnumbered=true}

\usepackage{tikz}
\usetikzlibrary{arrows,calc,decorations.pathreplacing,decorations.markings,intersections,shapes.geometric,through,fit,shapes.symbols,positioning,decorations.pathmorphing}

\usepackage{braket}
\usepackage{tensor}

\usepackage[amsmath,thmmarks,hyperref]{ntheorem}
\theoremstyle{plain}

\newtheorem{lemma}{Lemma}[section]
\newtheorem{proposition}[lemma]{Proposition}
\newtheorem{corollary}[lemma]{Corollary}
\newtheorem{theorem}[lemma]{Theorem}

\theoremstyle{plain}
\theoremnumbering{Alph}

\theoremstyle{plain}
\theorembodyfont{\upshape}
\theoremsymbol{\ensuremath{\blacklozenge}}

\newtheorem{definition}[lemma]{Definition}
\newtheorem{example}[lemma]{Example}
\newtheorem{examples}[lemma]{Examples}
\newtheorem{remark}[lemma]{Remark}
\newtheorem{remarks}[lemma]{Remarks}

\numberwithin{equation}{section}

\theoremstyle{nonumberplain}
\theoremsymbol{\ensuremath{\blacksquare}}

\newtheorem{proof}{Proof}

\newcommand\bZ{{\mathbb Z}}

\newcommand\cT{{\mathcal T}}

\DeclareMathOperator{\id}{id}

\newcommand{\qedhere}{\mbox{}\hfill\ensuremath{\blacksquare}}

\definecolor{ForestGreen}{RGB}{20,130,40}

\title{A categorical approach to the set-theoretic Yang-Baxter equation}

\author{A.L. Agore, A. Chirvasitu and G. Militaru}

\begin{document}

\date{}

\newcommand{\Addresses}{{% additional braces for segregating \footnotesize
  \bigskip
  \footnotesize

  %\medskip

\author{A. L. Agore:}
 \textsc{Simion Stoilow Institute of Mathematics of the Romanian Academy}
  \par\nopagebreak
  \textsc{Bucharest, P.O. Box 1-764, 014700, Romania}
  \par\nopagebreak
  \textit{E-mail address}: \texttt{ana.agore@gmail.com}

  \medskip

  \author{A. Chirvasitu:}
  \textsc{Department of Mathematics, University at Buffalo}
  \par\nopagebreak
  \textsc{Buffalo, NY 14260-2900, USA}
  \par\nopagebreak
  \textit{E-mail address}: \texttt{achirvas@buffalo.edu}

  \medskip

 \author{G. Militaru:}
  \textsc{Faculty of Mathematics and Computer Science, University of Bucharest}
  \par\nopagebreak
  \textsc{Str. Academiei 14, RO-010014 Bucharest 1, Romania}
  \par\nopagebreak
  \textit{E-mail address}: \texttt{gigel.militaru@gmail.com}

  % % \medskip
  % %
  % % \textsc{Department of Mathematics, INSTITUTION}
  % % \par\nopagebreak
  % % \textsc{ADDRESS}
  % % \par\nopagebreak
  % % \textit{E-mail address}: \texttt{??}
  % %

}}

\maketitle

\bigskip

\begin{abstract}
We investigate set-theoretic solutions of the Yang--Baxter equation from a categorical perspective. The paper has two main objectives. On the one hand, several existing constructions and classification results in the literature are recast in categorical terms, thereby showing that certain hypotheses previously imposed on such solutions are in fact superfluous. On the other hand, the categorical approach leads naturally to new classes of Yang--Baxter solutions. Our approach consists in identifying suitable subcategories of the category $\mathcal{YB}$ of Yang--Baxter solutions and providing categorical characterisations of the solutions they encode. As an application, we prove that the category of P{\l}onka bi-magmas is isomorphic to the subcategory of $\mathcal{YB}$ consisting of solutions of the Baaj--Long--Skandalis equation. Along the way, we introduce and study a new class of Lyubashenko type solutions and examine the extent to which non-degenerate bijective solutions of the braid equation arise from skew left braces.
\end{abstract}

\noindent {\em Key words: set-theoretic Yang-Baxter equation; variety of algebras; Freyd's adjoint functor theorem; (co)limits}

\vspace{.5cm}

\noindent{MSC 2020: 16T25; 17B38; 05A18; 08A30; 08A35; 08A62; 08B25; 18A40; 18A30}

%\tableofcontents

%%%%%%%%%%%%%%%%%%%%%%%%%%%%%%%%%%%%%%%%%%%%%%%%%%%%%%%%%%%%%%%%%%%%%%%%%%%%%
%%%%%%%%%%%%%%%%%%%%%%%%%%%%%%%%%%%%%%%%%%%%%%%%%%%%%%%%%%%%%%%%%%%%%%%%%%%%%
\section*{Introduction}
The origins of the quantum Yang-Baxter equation (QYBE) can be traced back to the independent contributions of Yang \cite{Yang}, who studied the one-dimensional many-body problem, and Baxter \cite{baxter}, whose work focused on statistical mechanics. Despite its fundamental role in mathematical physics and the theory of quantum groups, the problem of finding and classifying all solutions of the QYBE remains largely open. In this context, the idea of studying a set-theoretic version of the Yang-Baxter equation was first suggested by Drinfel'd in \cite{dr}. More precisely, given a set $X$, Drinfel'd asked for the classification of all maps $R: X \times X \to X \times X$ satisfying the equation:
\begin{equation}\label{qybe2}
R^{12} R^{13} R^{23} = R^{23}R^{13}R^{12}
\end{equation}
where $R^{ij} : X \times X \times X \to X \times X \times X$ denotes $R$ acting in the $i^{th}$ and $j^{th}$ components. In recent years, both the above set-theoretic Yang-Baxter equation and its equivalent braid equation
\begin{equation}\label{braid}
R^{12}R^{23}R^{12} = R^{23}R^{12}R^{23}
\end{equation}
have received significant attention, owing both to the rich algebraic structures underlying their solutions and to their applications across mathematics and physics.\\
Since a complete classification of all Yang-Baxter solutions on a given set $X$ is generally out of reach, the standard strategy in the literature has been to construct broad families of solutions by endowing $X$ with additional algebraic structures (such as a (quasi-)group structure) and imposing identities chosen so that a certain map $R$, induced by those algebraic structures,
 satisfies \eqref{qybe2} or \eqref{braid}. Prototypical examples include cycle sets \cite{rump}, (bi)racks \cite{PPZ}, skew left braces \cite{vendramin1}, trusses \cite{brz}, post-groups \cite{bai_guo}, and the more recent P{\l}onka bi-magmas \cite{ACMv3}. Each of these constructions yields a class of solutions of the Yang-Baxter (resp. braid) equation by means of an auxiliary structure.\\
The present paper develops a categorical approach to the set-theoretic Yang-Baxter equation. A number of results of a categorical nature have already appeared in the literature. For example, the foundational work of Etingof, Schedler and Soloviev \cite[Theorem 2.10]{etingof} establishes an equivalence between the category of non-degenerate, bijective, and involutive solutions of the braid equation (referred to therein as \emph{non-degenerate symmetric sets}) and the category of bijective cocycle quadruples. In the same vein, the more recent paper \cite[Theorem 2.9]{acm1} proves an equivalence between the category of Frobenius-Separability type solutions of the braid equation and the category of pointed Kimura semigroups. Several other results from the Yang-Baxter literature admit a categorical interpretation, although this viewpoint is not generally made explicit. In particular, \cite[Theorem 9]{LuYZ}, which associates a universal group with a braiding operator to each non-degenerate solution of the braid equation, can be reformulated, via the equivalent language of skew left braces \cite[Theorem 4.5]{smo_ven}, as a left adjoint to an inclusion functor. Likewise, Rump's correspondence \cite[Proposition 1]{rump} between cycle sets and left non-degenerate unitary solutions of the Yang-Baxter equation is in fact an isomorphism of categories. We return to these examples in Section~\ref{se:freyd}.\\
The purpose of this paper is twofold. First, we survey and reinterpret several results from the literature in categorical terms; this new perspective allows us to remove certain unnecessary restrictions that were previously imposed on these solutions (see e.g.\ Corollary~\ref{cor:primaadjreform}, where the bijectivity and non-degeneracy hypotheses imposed in \cite{LuYZ} are shown to be superfluous for the existence of the left adjoint). Second, using categorical methods, we establish the existence of new classes of solutions and provide a complete structural description of one such class, including a precise characterisation of the Yang-Baxter solutions it gives rise to. Concretely, our approach rests on the following two-step strategy, which we expect can be applied to many other constructions of the type surveyed here. For stating the problems we use the language of bi-magmas recalled in Section~\ref{se:prel}. \vspace{0.5cm}\\
{\bf Problem A}:  \emph{Construct categories $\mathcal{C}$ (ideally varieties of algebras, or subcategories thereof, in the sense of universal algebra) together with a functor
\[
F \colon \mathcal{C} \longrightarrow {}_{\rm YB}{\rm BiMag} \, \bigl( \cong \mathcal{YB} \bigr)
\]
to the variety of Yang-Baxter bi-magmas (whose isomorphism with the category $\mathcal{YB}$ of solutions of the Yang-Baxter equation is recalled in \eqref{isoybbimag} below).}  \vspace{0.5cm}\\
Such (sub)categories give rise to new families of solutions of the
Yang-Baxter equation and, in certain cases, provide a sufficiently flexible framework for deriving structural and classification results for important classes of objects.
Moreover, $F$ being a genuine \emph{functor}, rather than a mere assignment of a Yang-Baxter solution $(X,R)$ to each object of $\mathcal{C}$, guarantees that morphisms of $\mathcal{C}$ (maps preserving the extra operations) induce morphisms of the associated solutions of the Yang-Baxter equation. Therefore, constructions such as free objects, quotients, products, and other (co)limits in $\mathcal{C}$ are automatically transported to the corresponding solutions along $F$. When $\mathcal{C}$ is a variety of algebras, Freyd's adjoint functor theorem (Corollary~\ref{cor:Freyd}) turns this into a systematic source of universal constructions (e.g. free solutions, universal enveloping objects) without requiring a case-by-case argument. \vspace{0.5cm}\\
{\bf Problem B}:  \emph{Identify which solutions of the Yang-Baxter equation are produced by $F$.}\vspace{0.5cm}\\ More precisely, we aim to characterize the essential image of $F$ intrinsically, in terms of properties of $R$ itself, and decide whether $F$ is a full embedding, or even an isomorphism, onto the corresponding subcategory of $\mathcal{YB}$ consisting of all solutions of the Yang-Baxter equation. In particular, this allows us to determine precisely how much of the category $\mathcal{YB}$ each construction reaches. \\
We successfully apply this strategy to the recently introduced P{\l}onka bi-magmas \cite[Definition 3.7]{ACMv3}.\\
\textbf{Outline of the paper.} Section~\ref{se:prel} sets up the categorical framework used throughout. In particular, it recalls the isomorphism between the categories of Yang-Baxter solutions and Yang-Baxter bi-magmas and places several known classes of solutions (e.g. right/left P{\l}onka magmas, P{\l}onka bi-magmas) as subcategories of Yang-Baxter (resp. braid) solutions.\\
Section~\ref{se:bls} carries out the strategy outlined above for P{\l}onka bi-magmas (\cite[Definition 3.7]{ACMv3}). We begin by introducing, in Definition~\ref{def:longdef}, the subcategory $\mathcal{BLS} \subseteq \mathcal{YB}$ consisting of solutions of the \emph{Baaj-Long-Skandalis} (BLS, for short) equation. The main result of the section is Theorem~\ref{teorema2} which establishes an isomorphism between the category of P{\l}onka bi-magmas and the category $\mathcal{BLS}$ of solutions of the BLS equation. Theorem~\ref{th:ultima} combines Theorem~\ref{teorema2} with the structure theorem for P{\l}onka bi-magmas to give an explicit combinatorial description of all BLS-solutions in terms of set partitions and commuting families of self-maps. Furthermore, the new class of Lyubashenko type solutions is introduced (Definition~\ref{def:lymag}). It is shown to strictly contain both the diagonal solutions and the BLS-solutions, while being strictly contained in $\mathcal{YB}$ (Example~\ref{ex:notlyb}, Corollary~\ref{cor:nolimcolim}). \\
Section~\ref{se:freyd} develops the categorical framework around the Yang-Baxter (resp. braid) equation. First we revisit the intensively studied class of (left-right) non-degenerate solutions of the braid equation and address Problem B for the functor $\mathcal{S}$ sending a skew left brace to its corresponding (left-right) non-degenerate bijective solution of the braid equation. Proposition~\ref{prop_im} shows that its essential image
is a proper subcategory of $\mathcal{BR}^{b,n}$, the category of all bijective and (left-right) non-degenerate solutions of the braid equation, by constructing a large class of bijective and (left-right) non-degenerate solutions of the braid equation which are never isomorphic to solutions arising from skew left braces. We then turn to the general categorical machinery around Freyd's adjoint functor theorem: after recasting Rump's correspondence between cycle sets and left non-degenerate unitary solutions as an isomorphism of categories (Proposition~\ref{cyc_iso}), we establish the existence of free YB-solutions and free bijective YB-solutions (Corollaries~\ref{cor:freesolgen}, ~\ref{co:BLSadj} and Remarks~\ref{constrfreemag}(\ref{bijeuniv})). Moreover, we reformulate the Lu-Yan-Zhu \cite{LuYZ} construction of the universal skew left brace associated to a braid solution as a left adjoint to the functor ${\rm SLB} \to \mathcal{BR}$ (Corollary~\ref{cor:primaadjreform}), removing the bijectivity and non-degeneracy hypotheses used in the original proof. The relevant existence results for various classes of Yang-Baxter solutions of interest are all consequences of Freyd's adjoint functor theorem (see Corollary~\ref{cor:freesolgen} and Corollary~\ref{cor:primaadjreform}).\\
Section~\ref{se:conclusion} outlines several open problems and possible directions for future research. Motivated by the broad range of one-sided and two-sided quasi-group conditions that can be imposed on the two magmas underlying a solution of the Yang-Baxter (resp. braid) equation, Problem~C points out the non-degeneracy conditions not addressed in the literature. We also return to the functor sending a skew left brace to its corresponding (left-right) non-degenerate bijective solution of the braid equation of Section~\ref{se:freyd}, and record as an open problem the characterization of its essential image.

\subsection*{Acknowledgements}
Some of the results in this paper were first announced in 2023 in the preprint \cite{CMv1}, which is not intended for publication and will remain available only on arXiv. In preparing the final version, we used ChatGPT (OpenAI) and Claude (Anthropic, Claude Opus~5.5) for proofreading, correcting typos, and making minor improvements. Claude Opus 5.5 identified a mistake in Example 2.13 and Proposition 2.14 of \cite{CMv1}. The mistake stems from a misprint in \cite[Theorem 2.5 (iii)]{etingof2}, where the stated solution satisfies the Yang-Baxter equation, whereas the solutions presented in parts (i) and (ii) of the same theorem satisfy the braid equation. This misprint does not affect the proof of \cite[Theorem 2.5]{etingof2}. The aforementioned example and proposition have been corrected and are included here as Example~\ref{ex:notlyb} and Proposition~\ref{pr:nomor}, respectively.
%%%%%%%%%%%%%%%%%%%%%%%%%%%%%%%%%%%%%%%%%%%%%%%%%%%%%%%%%%%%%%%%%%%%%%%%%%%%%
%%%%%%%%%%%%%%%%%%%%%%%%%%%%%%%%%%%%%%%%%%%%%%%%%%%%%%%%%%%%%%%%%%%%%%%%%%%%%
\section{Preliminaries}\label{se:prel}

Given two sets $X$ and $Y$, we denote by $\tau_{X, Y} \colon X\times Y \to Y \times X$ the flip map defined by $\tau_{X, Y} \, (x, y) = (y, x)$, for all $x\in X$ and $y\in Y$. If $R\colon X \times X \to X\times X$ is a map, we define:
\begin{eqnarray*}
  && R^{12} ,\,\, R^{23},\,\, R^{13} : X \times X \times X \to X \times X \times X\\
  && R^{12} := R \times {\rm Id}_X, \quad  R^{23} := {\rm Id}_X \times R, \quad R^{13}: = ({\rm Id}_X \times \tau_X) \circ R^{12} \circ ({\rm Id}_X \times \tau_X)
\end{eqnarray*}
where $\tau_X = \tau_{X, X}$. Furthermore, we write $\pi_1$, $\pi_2 \colon X\times X \to X$ for the canonical projections defined by  $\pi_1 (x, y) = x$, $\pi_2 (x, y) = y$ for all $x$, $y \in X$. A map
$R \colon X \times X \to X \times X$ is called \emph{unitary} if $R^{21} R = {\rm Id}_{X \times X}$, where $R^{21} := \tau_X R \tau_X$. Furthermore, $R$ is called of \emph{diagonal type} if $R(x, x) = (x, x)$,
for all $x\in X$. \\
Recall that a map $R: X\times X \to X\times X$ is called a
set-theoretic solution of the \emph{Yang-Baxter equation} if (\ref{qybe2}) holds. In the sequel, a map $R$ satisfying equation (\ref{qybe2}) will be called
briefly a \emph{YB-solution on $X$}. Similarly, the map $R: X\times X \to X\times X$  is called a
set-theoretic solution of the \emph{braid equation} if (\ref{braid}) holds. Both equations feature prominently in the literature devoted to the set-theoretic Yang-Baxter equation, as they are in fact equivalent. Indeed, it can be easily seen that $R$ is a solution of the braid equation (\ref{braid}) if and only if $\tau_{X} \circ R$ is a YB-solution. \\
Throughout we denote by $\mathcal{YB}$ the category of solutions of (\ref{qybe2}), i.e. objects of $\mathcal{YB}$ are pairs $(X, \, R)$, where $X$ is a set and $R$ a YB-solution on $X$. A morphism $\sigma: (X, \, R) \to (X', \, R')$ in $\mathcal{YB}$ is a map $\sigma: X \to X'$ such that $(\sigma \times \sigma) \circ R = R' \circ (\sigma \times \sigma)$. Note that this is equivalent to $\sigma: X \to X'$ being a morphism between the bi-magmas associated: i.e. $\sigma (x \cdot y) = \sigma(x) \cdot' \sigma(y)$ and $\sigma (x \ast y) = \sigma(x) \ast' \sigma(y)$, for all $x$, $y\in X$ (see (\ref{notatieR}) below). Two solutions $(X, \, R)$ and $(X', \, R') \in \mathcal{YB}$ are called \emph{isomorphic} if they are isomorphic as objects in the category $\mathcal{YB}$. For a given YB-solution $(X, \, R) \in \mathcal{YB}$ we denote by ${\rm Aut} \, (X, \, R)$ its automorphism group in the category $\mathcal{YB}$.\\
In the same fashion, let $\mathcal{BR}$ denote the category of solutions of (\ref{braid}), i.e. objects of $\mathcal{BR}$ are pairs $(X, \, R)$, where $X$ is a set and $R$ is a solution on $X$ of the braid equation. A morphism $\sigma: (X, \, R) \to (X', \, R')$ in $\mathcal{BR}$ is a map $\sigma: X \to X'$ such that $(\sigma \times \sigma) \circ R = R' \circ (\sigma \times \sigma)$.\\
The functor
\begin{equation}\label{iso_br_yb}
\mathcal{YB} \to \mathcal{BR}, \qquad (X, \, R) \mapsto (X, \, \tau_X \circ R)
\end{equation}
is obviously an isomorphism of categories with inverse given by $(X, \, R) \mapsto (X, \, \tau_X \circ R)$.\\
\textbf{Varieties of algebras and Freyd's theorem.} For the basic notions of universal algebras we refer the reader to
\cite{adamek_ros, bergman, bruck, buris, LauD} and for category theory to \cite{Ago, mcl}. We recall here only the main concepts that will be used throughout the paper. A  \emph{universal algebra} (or an \emph{algebra}, for short) is a pair $A = (A, \, (\mu_i)_{i\in I})$,
where $A$ is a set and $\mu_i : A^{n_i} \to A$ is an $n_i$-ary operation on $A$ for each $i \in I$, with the integer $n_i \geq 0$  denoting the \emph{arity} of $\mu_i$. Two algebras $A$ and $B$ are said to have the \emph{same signature} (or \emph{type}) if their operations are indexed by the same set $I$ and corresponding operations have the same arity. \\
A \emph{variety of algebras} is a subcategory $\mathcal{V}$ of the category of all universal algebras of a fixed signature whose objects satisfy a prescribed family of identities. Classical examples include the variety of all lattices, (semi)groups, rings, and $R$-modules over a ring $R$. A fundamental theorem of Birkhoff \cite[Theorem 3.9]{adamek_ros} states that a class of universal algebras of the same signature is a variety of
algebras if and only if it is closed under taking subalgebras, direct products, and homomorphic images. Every variety of algebras $\mathcal{V}$ is a (small)
complete and cocomplete category: that is, it admits all (small) limits and colimits \cite[p. 18]{bergman}. For further properties of varieties of algebras we refer to \cite[Remark 3.6 and Corollary 3.7]{adamek_ros}. In particular, every variety of algebras is a \emph{locally finitely presentable
category} \cite[Definition 1.9]{adamek_ros}. A powerful tool in the theory of varieties of algebras is Freyd's famous theorem (\cite{freyd} or \cite[Theorem 8.14, p. 26]{bergman}). The following corollary
 \cite[Corollary 8.17, p. 28]{bergman}, recalled here for convenience, will be used intensively in Section~\ref{se:freyd}.%; to quote George Bergman, it is ``\emph{more widely known that the Theorem itself}''.
\begin{corollary}\label{cor:Freyd}
 Any functor between varieties of algebras which respects underlying sets has a left adjoint. In particular, any forgetful functor $\mathcal{V} \to {\rm Set}$ has a left adjoint.  \qedhere
\end{corollary}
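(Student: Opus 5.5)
The plan is to derive the statement from Freyd's general adjoint functor theorem, in the form \cite[Theorem 8.14]{bergman}: a functor $U\colon \mathcal{V}\to\mathcal{W}$ between locally small categories, with $\mathcal{V}$ complete, has a left adjoint if and only if it preserves small limits and satisfies the solution set condition. Let $U\colon \mathcal{V}\to\mathcal{W}$ be a functor between varieties that respects underlying sets. This means that $|U(A)| = |A|$ for every algebra $A$, and that $U$ sends a homomorphism to the same underlying map. The second sentence of the corollary is then the special case where $\mathcal{W}$ is the variety of sets with empty signature, so only the first sentence needs proof.

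\textbf{Completeness and preservation of limits.} Completeness of $\mathcal{V}$ was recalled above. In any variety, limits are computed on underlying sets: products are cartesian products with coordinatewise operations, and equalizers are the subsets where the two maps agree, which are subalgebras. The forgetful functors $\mathcal{V}\to{\rm Set}$ and $\mathcal{W}\to{\rm Set}$ therefore create limits. Since $U$ commutes with these forgetful functors, the limit cone $\lim A_i$ in $\mathcal{V}$ goes under $U$ to a cone in $\mathcal{W}$ with the same underlying cone in ${\rm Set}$. Limits in $\mathcal{W}$ are created by the forgetful functor, so this cone is a limit in $\mathcal{W}$ as well. Hence $U$ preserves limits.

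\textbf{Solution set condition.} This is the main step. Fix $W\in\mathcal{W}$ and a morphism $f\colon W\to U(A)$ with $A\in\mathcal{V}$. Let $B\subseteq A$ be the subalgebra of $A$ generated by the set $f(W)$. Because $U$ respects underlying sets, the inclusion $B\hookrightarrow A$ is a $\mathcal{V}$-morphism whose image under $U$ is the set inclusion $U(B)\to U(A)$.

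\begin{itemize}
\item $f$ factors through $U(B)$ as a map of sets. The factored map is a $\mathcal{W}$-morphism $W\to U(B)$, because the $\mathcal{W}$-operations on $U(B)$ are the restrictions of those on $U(A)$: the inclusion is injective and is a homomorphism.
\item The cardinality of $B$ is at most $\kappa=\max(|W|,|I|,\aleph_0)$, where $I$ indexes the signature of $\mathcal{V}$. Indeed, every element of $B$ is the value of a term in elements of $f(W)$, and there are at most $\kappa$ such terms.
\end{itemize}

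Take as solution set a set of representatives of isomorphism classes of $\mathcal{V}$-algebras whose underlying set is a subset of some fixed set of cardinality $\kappa$. This is a set, since there are only set-many possible operation tables. Together with all $\mathcal{W}$-morphisms from $W$ into their images under $U$, it satisfies the solution set condition, after transporting $B$ along an isomorphism.

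Freyd's theorem then yields the left adjoint. The only delicate point is the compatibility claim in the first item above, namely that a sub-$\mathcal{V}$-algebra is also a sub-$\mathcal{W}$-algebra under $U$. This follows from the set-level identification of $U$ on the monomorphism $B\hookrightarrow A$. If one instead defines "respects underlying sets" only up to natural isomorphism with the forgetful functors, the same argument goes through after transporting structure along that isomorphism.
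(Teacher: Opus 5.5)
Your proof is correct. The paper gives no argument of its own here: it quotes \cite[Corollary 8.17]{bergman} as a consequence of Freyd's theorem. When it later needs the adjoints concretely (Remarks~\ref{constrfreemag}(1) and Remark~\ref{re:aproximarea}), it describes them by generators and relations.

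You instead verify the hypotheses of the general adjoint functor theorem. Limits are created by the forgetful functors, and the solution set comes from the subalgebra $B$ generated by $f(W)$, with $|B|\le\max(|W|,|I|,\aleph_0)$. The one delicate point is that $f$ corestricts to a $\mathcal{W}$-morphism $W\to U(B)$, and you handle it correctly through the injectivity of $U(B\hookrightarrow A)$.

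The more direct route behind the paper's generators-and-relations descriptions runs differently. Since $U(g)=g$ for every $\mathcal{V}$-morphism $g$, each $n$-ary $\mathcal{W}$-operation $\omega$ gives a family $\omega^{U(A)}\colon|A|^n\to|A|$ that is natural in $A$. By Yoneda, with $|A|^n\cong\mathrm{Hom}_{\mathcal{V}}(F_{\mathcal{V}}(n),A)$ where $F_{\mathcal{V}}(n)$ is the free $\mathcal{V}$-algebra on $n$ generators, $\omega$ is therefore a derived $\mathcal{V}$-operation $t_\omega$. The left adjoint then sends $W$ to the $\mathcal{V}$-algebra presented by generators $|W|$ and relations $t_\omega(w_1,\dots,w_n)=\omega^W(w_1,\dots,w_n)$, with unit $w\mapsto$ its class.

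The two routes buy different things. The generators-and-relations route produces the adjoint explicitly and exhibits the $\mathcal{W}$-operations as $\mathcal{V}$-terms, which is what the paper's descriptions of free YB-solutions and of the reflection onto $\mathcal{BLS}$ actually use. Your route is non-constructive, but it needs no term-level analysis of $U$: only closure of varieties under products and subalgebras, plus a cardinality count.
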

\textbf{Magmas, Yang-Baxter bi-magmas and P{\l}onka bi-magmas.}
The two classes of universal algebras relevant to our study are \emph{magmas} and \emph{bi-magmas}, which we recall below. A \emph{magma} (\cite[Definition 1.1, Chapter IV]{serre} or \cite[Definition 1]{bourbaki}) is a pair $(X, \, \cdot)$ consisting of a set $X$ with a binary operation $\cdot : X \times X \to X$, $(x, \, y) \mapsto x\cdot y$, called \emph{multiplication}. A \emph{morphism of magmas} $f: (X, \, \cdot) \to (X', \, \cdot')$ is a function $f: X \to X'$ such that $f(x \cdot y) = f(x) \cdot' f(y)$, for all $x$, $y\in X$. We denote by ${\rm Mag}$ the category of all magmas. The forgetful functor to the category of sets
    \begin{equation*}
      {\rm Mag} \xrightarrow[]{\mathrm{forget}} {\rm Set}, \qquad (X, \, \cdot) \mapsto X
    \end{equation*}
admits a left adjoint $\mathcal{FM} (-) : {\rm Set} \to {\rm Mag}$, where for every set $X$, $\mathcal{FM} (X)$ denotes the free magma generated by $X$, as constructed in \cite[Theorem 1.2, Chapter IV]{serre} or \cite[Proposition 1, p. 81]{bourbaki}. In colloquial terms, $\mathcal{FM} (X)$ consists of all \emph{non-associative words} on $X$, containing all parentheses, with the multiplication given by the concatenation of words.\\
A \emph{congruence} on a magma $(X, \, \cdot)$ is an equivalence relation $\approx$ on $X$ compatible with the multiplication \cite[Definition 11, p. 11]{bourbaki}; that is, whenever $x \approx x'$ and $y \approx y'$ one has $x \cdot y \approx x' \cdot y'$. If $\approx$ is a congruence on $(X, \, \cdot)$, then the \emph{quotient magma} is the set $X/\approx$ equipped with the multiplication
$
 \overline{x} \cdot \overline{y} := \overline{x\cdot y}
$
for all $\overline{x}$, $\overline{y} \in X/\approx$. A magma $(X, \, \cdot)$ is called:
\begin{enumerate}
\item a \emph{left} (resp. \emph{right}) \emph{quasi-group} \cite{PPZ} if for any $a \in X$ the left (resp. right) translation map $l_{a}^{\cdot} : X \to X$, $l_{a}^{\cdot} (x) := a \cdot x$ (resp. $r_{a}^{\cdot} : X \to X$, $r_{a}^{\cdot} (x) := x \cdot a$) is bijective;
\item a \emph{quasi-group} if it is simultaneously a left and right quasi-group;
\item \emph{left} (resp. \emph{right}) \emph{cancellative} if for any $a \in X$ the left (resp. right) translation map $l_{a}^{\cdot} : X \to X$ (resp. $r_{a}^{\cdot} : X \to X$) is injective;
\item a \emph{right P{\l}onka magma} \cite[Definition 3.3]{ACMv3} if the multiplication satisfies the following two compatibility conditions for any $x$, $y$, $z\in X$
\begin{equation}\label{rkm1}
(x \cdot y) \cdot z = (x \cdot z) \cdot y,    \qquad x \cdot (y \cdot z) = x \cdot y.
\end{equation}
We denote by ${\rm Mag}^{\rm rP}$ the full subcategory of ${\rm Mag}$ of all right P{\l}onka magmas. The adjective \emph{right} used above comes from the fact that the two axioms of (\ref{rkm1}) can be rewritten equivalently using the right translation maps (see \cite[Remarks 3.4 (1)]{ACMv3} for more details);
\item a \emph{left P{\l}onka magma} \cite[Definition 3.6]{ACMv3} if $(X, \, \cdot^{op})$ is a right P{\l}onka magma or, in other words, if the multiplication satisfies the following compatibility conditions for any $x$, $y$, $z\in X$
\begin{equation}\label{remag}
x \cdot (y \cdot z) = y \cdot (x \cdot z),  \qquad (x \cdot y) \cdot z = y \cdot z.
\end{equation}
The full subcategory of ${\rm Mag}$ of all left P{\l}onka magmas will be denoted by ${}^{\rm lP}{\rm Mag}$.
\end{enumerate}
A \emph{bi-magma} is a triple $(X, \, \cdot, \, \ast)$ consisting of a
set $X$ with two binary operations $\cdot : X \times X \to X$ and $\ast : X \times X \to X$. As explained in the companion paper \cite[Introduction]{ACMv3}, there is a one-to-one correspondence between functions $R: X\times X \to X \times X$ and bi-magma structures on the underlying set $X$. We therefore work in the equivalent, and more convenient, setting of bi-magmas rather than with maps $R: X\times X \to X \times X$. Throughout, we implicitly assume that the map $R = R_{(\cdot,\, \ast)}$ associated with the bi-magma $(X, \, \cdot, \, \ast)$ is given for any $x$, $y \in X$ by:
\begin{equation}\label{notatieR}
R(x, \, y) = R_{(\cdot, \, \ast)} \, (x, \, y) = (x\cdot y, \, x \ast y)
\end{equation}
where we denote $x \cdot y = (\pi_1 \circ R) (x,y)$ and $x \ast y = (\pi_2 \circ R) (x,y)$. \\
A \emph{morphism of bi-magmas} $f: (X, \, \cdot, \, \ast) \to (X', \, \cdot', \, \ast')$ is a function $f: X \to X'$ such that $f(x \cdot y) = f(x) \cdot' f(y)$ and
$f(x \ast y) = f(x) \ast' f(y)$, for all $x$, $y\in X$. We denote by ${\rm BiMag}$ the category of all bi-magmas. It is a variety
of universal algebras of signature consisting of two binary operations, with an empty family of identities. It follows, using Corollary~\ref{cor:Freyd}, that the forgetful functors
    \begin{equation*}
      {\rm BiMag} \xrightarrow[]{\mathrm{forget}} {\rm Mag}, \quad (X, \, \cdot, \, \ast) \mapsto (X, \, \cdot), \qquad
      {\rm BiMag} \xrightarrow[]{\mathrm{forget}} {\rm Set}, \qquad (X, \, \cdot, \, \ast) \mapsto X
 \end{equation*}
each admit a left adjoint.

\begin{remark}\label{res:multenon}
The classical notion of non-degeneracy for a solution $R$ of the braid equation introduced in \cite[Definition 1.1]{etingof} admits a natural reformulation in terms of its associated bi-magma: the two underlying magmas must be, respectively, a left quasi-group and a right quasi-group. We termed this \emph{left-right non-degeneracy} in \cite[Remark 1.3]{ACMv3}, as it is one of a broader family of non-degeneracy conditions that may be imposed on a given solution of the braid equation. Via the aforementioned bijective correspondence between solutions of the Yang-Baxter and braid equations, non-degeneracy for a solution $R$ of the Yang-Baxter equation is characterised by the two associated magmas being, respectively, a right quasi-group and a left quasi-group. We refer to this condition as \emph{right-left non-degeneracy}. In the same vein, left non-degeneracy of a Yang-Baxter solution as defined in \cite{rump}, comes down to requiring the magma $(X, \, \cdot)$ to be a right quasi-group.
\end{remark}
Throughout, we denote by $\mathcal{BR}^{b,n}$ the full subcategory of $\mathcal{BR}$ consisting of all bijective and left-right non-degenerate\footnote{As explained in Remark~\ref{res:multenon}, we call \emph{left-right non-degenerate} a solution of the braid equation that is non-degenerate in the sense classically used in the literature (e.g. \cite{etingof}).} solutions of the braid equation. Similarly, $\mathcal{YB}^{\rm ln, u}$ stands for the full subcategory of $\mathcal{YB}$ consisting of all unitary left non-degenerate solutions of the Yang-Baxter equation as considered in \cite{rump}. \\
Let $(X, \, \cdot, \, \ast)$ be a bi-magma and $R = R_{(\cdot, \, \ast)}: X\times X \to X\times X$ the associated map given by (\ref{notatieR}). Then we can easily prove that $R = R_{(\cdot, \, \ast)}$ is a solution of the Yang-Baxter equation (\ref{qybe2}) if and only if the following compatibilities hold for all $x$, $y$, $z\in X$ (\cite[Lemma 2.1]{rusii}):
\begin{align}
  (x\cdot y) \cdot z &= \bigl(x \cdot (y \ast z) \bigl) \cdot (y \cdot z)\label{YB1}\\
  \bigl( x \cdot (y \ast z) \bigl) \ast (y\cdot z ) &= (x \ast y) \cdot \bigl( (x\cdot y) \ast z  \bigl)\label{YB2} \\
  x \ast (y \ast z) &=  (x \ast y) \ast \bigl( (x\cdot y) \ast z\bigl)\label{YB3}
\end{align}
A bi-magma $(X, \, \cdot, \, \ast)$ satisfying the compatibilities (\ref{YB1})-(\ref{YB3}) will be called a \emph{Yang-Baxter bi-magma} (\cite[Definition 1.4]{ACMv3}), and we denote by ${}_{\rm YB}{\rm BiMag}$ the full subcategory of ${\rm BiMag}$ consisting of all Yang-Baxter bi-magmas. The category ${}_{\rm YB}{\rm BiMag}$ is a variety of algebras in the sense of universal algebras and the functor
\begin{equation}\label{isoybbimag}
  {}_{\rm YB}{\rm BiMag} \to \mathcal{YB}, \qquad (X, \, \cdot, \, \ast) \mapsto (X, \, R_{(\cdot, \, \ast)})
\end{equation}
is of course an isomorphism of categories. In particular, we obtain that the category $\mathcal{YB}$ of all YB-solutions, being isomorphic to a variety of algebras, is complete and cocomplete and is a locally finitely presentable category \cite[Definition 1.9, Corollary 3.7]{adamek_ros}.\\
The notion of a \emph{P{\l}onka bi-magma} was introduced in \cite[Definition 3.7]{ACMv3} as a means of constructing new YB-solutions. A P{\l}onka bi-magma is a bi-magma $(X, \, \cdot, \, \ast)$ such that:

(PM1) $(X, \, \cdot)$ is a right P{\l}onka magma;

(PM2) $(X, \, \ast)$ is a left P{\l}onka magma;

(PM3) The following compatibility conditions hold for any $x$, $y$, $z\in X$:
\begin{align}
  x \ast (y \cdot z) &= (x \ast y) \cdot z\label{KM3a}\\
  (x \cdot z) \ast y  &= x \ast y\label{KM3b} \\
  x \cdot (y\ast z) &= x \cdot z.\label{KM3c}
\end{align}
We denote by ${\rm PBiMag}$ the full subcategory of ${\rm BiMag}$ consisting of all P{\l}onka bi-magmas. The main application of P{\l}onka bi-magmas is given in \cite[Theorem 3.10]{ACMv3} where new YB-solutions are constructed: if $(X, \, \cdot, \, \ast)$ is a P{\l}onka bi-magma then the associated canonical map defined for any $x$, $y\in X$ by
    \begin{equation}\label{eq:solgenK}
      R = R_{(\cdot, \, \ast)} : X\times X \to X\times X, \qquad R (x, \, y) = (x\cdot y, \, x\ast y)
    \end{equation}
is a YB-solution.\\
Furthermore, using again Corollary~\ref{cor:Freyd}, each of the forgetful functors
    \begin{align}
      {\rm PBiMag}\ni (X, \, \cdot, \, \ast) &\xmapsto[]{\quad\mathrm{forget}\quad}X\in {\rm Set}\label{for1k}\\
      {\rm PBiMag}\ni (X, \, \cdot, \, \ast) &\xmapsto[]{\quad\mathrm{forget}\quad}(X, \, \cdot)\in {\rm Mag}^{\rm rP}\label{for2k}\\
      % {\rm PBiMag}\ni (X, \, \cdot, \, \ast) &\xmapsto[]{\quad\mathrm{forget}\quad}(X, \, \ast)\in {\rm Mag}\label{for3k}\\
    {\rm PBiMag}\ni (X, \, \cdot, \, \ast) &\xmapsto[]{\quad\mathrm{forget}\quad}(X, \, \cdot, \, \ast)\in {\rm BiMag}\label{for4k}
  \end{align}
admits a left adjoint. %However, the explicit description of these left adjoints is rather cumbersome and we do not include it here. BA DA! PRIMUL ADJUNCT E spectaculos.
We recall, for further use, the structure of P{\l}onka bi-magmas as proved in \cite[Theorem 3.24]{ACMv3}:

\begin{theorem}\label{th:strpbimag}
  Let $X$ be a set. A bi-magma structure $(X, \, \cdot, \, \ast)$ on $X$ is a P{\l}onka bi-magma if and only if there exist:

  \begin{itemize}

  \item A  partition $(X_i)_{i\in I}$ on $X$, i.e. $X = \coprod_{i\in I} \, X_i$;

  \item A pair of tuples $\bigl(f_i^j \bigl)_{(i,j) \in I\times I}$ and $\bigl(g_i^j \bigl)_{(i,j) \in I\times I}$
    of functions $f_i^j : X_i \to X_i$ and $g_i^j : X_i \to X_i$ such that for any $i$, $j$, $k\in I$:
    \begin{equation}\label{eq:comfunctddd}
      f_i^j \circ f_i^k = f_i^k \circ f_i^j, \qquad g_i^j \circ g_i^k = g_i^k \circ g_i^j, \qquad g_i^j \circ f_i^k = f_i^k \circ g_i^j.
    \end{equation}
  \end{itemize}

  The P{\l}onka bi-magma $(X, \, \cdot, \, \ast)$ associated to
  $\bigl((X_i)_{i\in I}, \,  \bigl(f_i^j \bigl)_{(i,j) \in I\times I}, \, \bigl(g_i^j \bigl)_{(i,j) \in I\times I} \bigl)$ in the above correspondence is given by:
  \begin{equation}\label{eq:multRPmdd}
    x_i \cdot x_j := f_i^j (x_i), \qquad   x_i \ast x_j := g_j^i (x_j)
  \end{equation}
  for all $x_i \in X_i$, $x_j \in X_j$ and $i$, $j\in I$.
\end{theorem}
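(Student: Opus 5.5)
The plan is to prove the two directions separately. The substantive part is the forward direction, where we must build the partition and the families of maps from the axioms.

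\emph{Sufficiency.} Suppose the operations are given by \eqref{eq:multRPmdd}. We verify (PM1)--(PM3) by tracking which block each product lands in.

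From the formulas, $x_i\cdot x_j\in X_i$ and $x_i\ast x_j\in X_j$.

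For (PM1):
\begin{itemize}
\item $(x_i\cdot x_j)\cdot x_k=f_i^k f_i^j(x_i)$, and this is symmetric in $j,k$ because the $f_i^\bullet$ commute.
\item $x_i\cdot(x_j\cdot x_k)=f_i^j(x_i)=x_i\cdot x_j$, since $x_j\cdot x_k\in X_j$.
\end{itemize}
(PM2) follows dually, using the commutation of the $g$'s.

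For (PM3):
\begin{itemize}
\item \eqref{KM3a} reads $g_j^i f_j^k(x_j)=f_j^k g_j^i(x_j)$, which is exactly the mixed commutation relation.
\item \eqref{KM3b} holds because $x_i\cdot x_k\in X_i$.
\item \eqref{KM3c} holds because $y\ast z$ lies in the block of $z$.
\end{itemize}

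\emph{Necessity.} Let $(X,\cdot,\ast)$ be a P{\l}onka bi-magma. We must produce the partition. A natural candidate is the relation
\[
x\sim y \iff x\cdot y=x \ \text{ and } \ y\cdot x=y .
\]
A more robust choice is to define the block of $x$ through the left translations: declare $x\sim y$ iff $z\cdot x=z\cdot y$ and $x\ast z=y\ast z$ for all $z$.

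We then aim to show three things.
\begin{enumerate}
\item $x\sim x\cdot y$ and $y\sim x\ast y$. The first follows from $z\cdot(x\cdot y)=z\cdot x$ (PM1) and $(x\cdot y)\ast z=x\ast z$ by \eqref{KM3b}. The second follows from $z\cdot(x\ast y)=z\cdot y$ by \eqref{KM3c} and $(x\ast y)\ast z=y\ast z$ (PM2).
\item $x\cdot y$ depends only on $x$ and the class of $y$. This is immediate from the definition of $\sim$.
\item Dually, $x\ast y$ depends only on $y$ and the class of $x$.
\end{enumerate}

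With these in hand, let $I=X/\sim$ and $X_i$ the classes, and set $f_i^j(x):=x\cdot y$ for any $y\in X_j$, and $g_j^i(y):=x\ast y$ for any $x\in X_i$. Point 1 shows $f_i^j$ and $g_i^j$ map $X_i$ into itself, and points 2--3 show they are well defined.

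The commutation relations \eqref{eq:comfunctddd} then follow from the first identities of \eqref{rkm1} and \eqref{remag}, together with \eqref{KM3a}. For example, $g_j^i f_j^k(y)=x\ast(y\cdot z)=(x\ast y)\cdot z=f_j^k g_j^i(y)$. For the case $f_i^j$ versus $g_i^k$ with independent indices, we choose representatives in the appropriate blocks.

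\emph{Main obstacle.} I expect the main difficulty to be proving that $\sim$ is an equivalence relation whose classes are exactly what is needed. It must be coarse enough for well-definedness, yet the $f_i^j$ must be indexed by the class of $y$ and not merely by $y$ itself, and this is built into $\sim$.

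One also has to check that the operations are recovered exactly by \eqref{eq:multRPmdd}, which is the content of points 2--3. Finally, for the correspondence to be bijective, one should verify that any partition realizing the structure agrees with $\sim$ up to relabeling, or at least that the construction yields some valid datum. The latter suffices for the ``if and only if'' as stated.
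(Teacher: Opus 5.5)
Your proof is correct. The paper gives no argument for this statement: it is quoted from \cite[Theorem 3.24]{ACMv3}, so there is no in-text proof to compare with, and your proposal works as a self-contained verification.

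In the sufficiency direction, the first identities of \eqref{rkm1}, \eqref{remag} and \eqref{KM3a} each reduce to one of the three commutation relations. The remaining four identities hold simply because $x_i\cdot x_j\in X_i$ and $x_i\ast x_j\in X_j$.

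In the necessity direction, your relation ($x\sim y$ iff $r^{\cdot}_x=r^{\cdot}_y$ and $l^{\ast}_x=l^{\ast}_y$) does all the work:
\begin{enumerate}[(1)]
\item the second identities of \eqref{rkm1} and \eqref{remag}, together with \eqref{KM3b} and \eqref{KM3c}, give $x\cdot y\sim x$ and $x\ast y\sim y$;
\item well-definedness of $f_i^j$ and $g_j^i$ is built into $\sim$;
\item the commutation relations come out exactly as you indicate.
\end{enumerate}
This choice is also canonical. If any partition $(X_i)$ realizes the structure, then for $x,y\in X_j$ and $z\in X_k$ one has $z\cdot x=f_k^j(z)=z\cdot y$ and $x\ast z=g_k^j(z)=y\ast z$. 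Hence every realizing partition refines $\sim$, and $\sim$ is the coarsest one.

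Some of your commentary needs correcting, though none of it affects the argument.
\begin{enumerate}[(1)]
\item Discard the first candidate relation. It is not even reflexive unless $x\cdot x=x$; take $X_f^g$ with $fg=gf$ and $f\neq \mathrm{Id}_X$.
\item The ``main obstacle'' is not one. The relation $\sim$ is the kernel of $x\mapsto (r^{\cdot}_x,\, l^{\ast}_x)$, so it is automatically an equivalence relation. Also, it is defined via the \emph{right} translations of $\cdot$, not the left ones.
\item You cannot expect every realizing partition to agree with $\sim$ up to relabelling. The paper states that the correspondence is surjective but not injective. For instance, $x\cdot y=x$, $x\ast y=y$ on a two-element set is realized both by the one-block partition and by the partition into singletons, with all maps identities. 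As you say, exhibiting one valid datum is all the ``if and only if'' requires. The correct strengthening is the refinement property above, not uniqueness.
\end{enumerate}
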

The correspondence in Theorem~\ref{th:strpbimag} is surjective but \emph{not} injective as shown in \cite[Remark 3.26]{ACMv3}.

%%%%%%%%%%%%%%%%%%%%%%%%%%%%%%%%%%%%%%%%%%%%%%%%%%%%%%%%%%%%%%%%
%%%%%%%%%%%%%%%%%%%%%%%%%%%%%%%%%%%%%%%%%%%%%%%%%%%%%%%%%%%%%%%%

\section{The Yang-Baxter equation and the structure of BLS-solutions}\label{se:bls}

In this section, we address Problem~B from the Introduction in the particular setting of YB-solutions induced by P{\l}onka bi-magmas as in (\ref{eq:solgenK}). More precisely, we are concerned with the following question:\, \emph{Which subcategory of $\mathcal{YB}$ is characterized by the family of solutions defined by (\ref{eq:solgenK})?} \\
Equivalently, in categorical terms we ask:

\emph{What is the image of the embedding functor ${\rm PBiMag} \lhook\joinrel\xrightarrow{\quad} \mathcal{YB}$, $(X, \, \cdot, \, \ast) \mapsto (X, \, R_{(\cdot, \, \ast)})$?}\\
To answer this question, we first recall the set-theoretic versions of three linear equations that arise in two seemingly unrelated contexts. The first two equations below were introduced by Baaj and Skandalis \cite[Definition 2.1]{Baaj} in connection with the duality theory of {$C^*$}-algebras. The third equation is the set-theoretic analogue of \cite[Definition 2.1]{militaruLong} and arose in the study of the category of Long dimodules over bialgebras, introduced by Long \cite{Long} in his work on the Brauer group.

\begin{definition}\label{def:longdef}
  Let $X$ be a set and $R \colon X \times X \to X\times X$ a map. Then $R$ is called:
  \begin{enumerate}[(1)]

  \item \emph{commutative} (resp. \emph{cocommutative}) if
    \begin{equation}\label{eq:bkcom}
      R^{12}R^{13} = R^{13}R^{12}, \qquad ({\rm resp.} \,\,\,  R^{13}R^{23} = R^{23}R^{13})
    \end{equation}

  \item a solution of the \emph{set-theoretic Long equation} if
    \begin{equation}\label{long}
      R^{12}R^{23} = R^{23}R^{12}
    \end{equation}

  \item a solution of the \emph{Baaj-Long-Skandalis} (BLS, for short) equation if it is simultaneously a commutative and a cocommutative solution of the Long equation.
  \end{enumerate}
\end{definition}
We denote by $\mathcal{BLS}$ the category of solutions of the BLS equation, whose objects are pairs $(X, \, R)$, where $X$ is a set and $R\colon X \times X \to X\times X$ is a solution of the three equations (\ref{eq:bkcom}) and (\ref{long}) on $X$. Such a map $R$ will be referred to as a \emph{BLS-solution} on $X$.

\begin{remarks}\label{longbefore}
  \begin{enumerate}[(1)]

  \item The Long equation (\ref{long}) is obtained from the Yang-Baxter equation (\ref{qybe2}) by deleting the middle term on both sides. This equation complements the equations (\ref{eq:bkcom}) by describing additional possible commutation relations between two distinct maps $R^{ij}$.

  \item The linear version of equation (\ref{long}) was first introduced and studied in \cite[Definition 2.1]{militaruLong} under the name of the  \emph{$\mathcal{D}$-equation}, and later in \cite[Chapter 7]{CMZ} under the name of the \emph{Long equation}. It was also investigated by Bichon and Street \cite{bichon_street} in the setting of monoidal categories. The latter terminology is motivated by the main result of \cite[Theorem 3.6]{militaruLong} (see also \cite[Theorem 74, p. 306]{CMZ}) which provides an FRT-type theorem for this equation: every (linear) solution of the Long equation arises as a canonical map associated with a Long dimodule over a canonical bialgebra constructed from the solution itself.  The category of Long dimodules was introduced in \cite{Long} in connection with the study of the Brauer group. For further details and motivation, we refer the reader to \cite{bichon_street, militaruLong} and \cite[Chapter 7]{CMZ}.

  \item A solution of the Long equation need not be a solution of the Yang-Baxter equation (see Examples~\ref{exs:solutiiLong} (\ref{longvs yb}) below). There is, however, a notable exception: any commutative and cocommutative solution of the Long equation (i.e. any BLS-solution) is a YB-solution. Indeed, if $R$ is a BLS-solution we obtain:
    \begin{align*}
      R^{12} R^{13} R^{23} \, \stackrel{(\ref{eq:bkcom})} = \,R^{13} R^{12} R^{23} \, \stackrel{(\ref{long})} = \, R^{13} R^{23} R^{12} \, \stackrel{(\ref{eq:bkcom})} = \, R^{23} R^{13} R^{12}
    \end{align*}
    which shows that $R$ is a YB-solution. Thus, we obtain a canonical embedding:
    \begin{equation}\label{eq:longinYB}
      \mathcal{BLS} \lhook\joinrel\xrightarrow{\quad}  \mathcal{YB}, \qquad (X, \, R) \mapsto (X, \, R)
    \end{equation}
  \end{enumerate}
\end{remarks}

\begin{examples}\label{exs:solutiiLong}
  \begin{enumerate}[(1)]

  \item For every set $X$, the identity map ${\rm Id}_{X\times X}$ provides a  BLS-solution. On the other hand, whenever $|X|\ge 2$, the flip map $\tau_X$ fails to satisfy the Long equation. Furthermore, the class of bijective BLS-solutions is closed under taking inverses: if $R$ is a bijective BLS-solution, then so is $R^{-1}$.

\item\label{longvs yb} Let $(X, \ast)$ be a magma and consider the map $R = R_{\ast} : X \times  X \to X \times X$ given by $R (x, \, y) = (x, \, x \ast y)$. Then $R_{\ast}$ is a solution of
the Long equation if and only if for all $x$, $y$, $z\in X$
$$(x \ast y) \ast z = y \ast z$$
that is, the second part of (\ref{remag}) holds. On the other hand, as proved in \cite[Example 2, p. 7]{dr}, $R_{\ast}$ is a solution of the Yang-Baxter equation if and only if for all $x$, $y$, $z\in X$
$$x \ast (y \ast z) = (x \ast y) \ast (x \ast z)$$
In particular, this shows that $R_{\ast}$ is simultaneously a solution of the Long and the Yang-Baxter equations if and only if $(X, \ast)$ is a left P{\l}onka magma.

\item\label{longvs yb2} Let $(X, \cdot)$ be a magma and $R = {}_{\cdot} R : X \times  X \to X \times X$ given by $R (x, \, y) = (x\cdot y, \, y)$.
Then $R$ is a solution of the Long equation if and only if for all $x$, $y$, $z\in X$
$$
x \cdot (y \cdot z) = x \cdot y
$$
and, $R$ is simultaneously a solution of the Long and the Yang-Baxter equations if and only if $(X,\cdot)$ is a right P{\l}onka magma.
\end{enumerate}
\end{examples}
With these concepts in place, we are now in a position to answer the question posed at the beginning of this section by showing that the subcategory $\mathcal{BLS} \subseteq \mathcal{YB}$ of all BLS-solutions
coincides with the family of YB-solutions which are described by (\ref{eq:solgenK}).

\begin{theorem}\label{teorema2}
The functor
\begin{equation}
{\rm PBiMag} \to  \mathcal{BLS} \subseteq \mathcal{YB}, \qquad (X, \, \cdot, \, \ast) \to (X, \, R_{(\cdot, \, \ast)})
\end{equation}
is an isomorphism of categories between the category of P{\l}onka bi-magmas and the category of BLS-solutions.
\end{theorem}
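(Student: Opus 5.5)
Since the bi-magma-to-map assignment $(X,\cdot,\ast)\mapsto (X,R_{(\cdot,\ast)})$ is already a bijection on objects (any map $R$ determines $\cdot=\pi_1 R$, $\ast=\pi_2 R$) and morphisms on both sides are the same set maps (as noted after (\ref{notatieR})), the functor is automatically fully faithful and injective on objects. Since ${\rm PBiMag}\subseteq {\rm BiMag}$ and $\mathcal{BLS}\subseteq\mathcal{YB}$ are full, the whole statement reduces to a purely equational claim: for a bi-magma $(X,\cdot,\ast)$, the map $R=R_{(\cdot,\ast)}$ is a BLS-solution if and only if (PM1)--(PM3) hold. I will prove this by translating each of the three equations (\ref{eq:bkcom}), (\ref{long}) into identities on $\cdot,\ast$, evaluating both sides on a triple $(x,y,z)$.

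The computations are as follows. For the Long equation, $R^{12}R^{23}(x,y,z)=R^{12}(x,y\cdot z,y\ast z)=(x\cdot(y\cdot z),\,x\ast(y\cdot z),\,y\ast z)$, while $R^{23}R^{12}(x,y,z)=R^{23}(x\cdot y,x\ast y,z)=(x\cdot y,\,(x\ast y)\cdot z,\,(x\ast y)\ast z)$. So (\ref{long}) is equivalent to
\[
x\cdot(y\cdot z)=x\cdot y,\qquad x\ast(y\cdot z)=(x\ast y)\cdot z,\qquad y\ast z=(x\ast y)\ast z.
\]
These are the second half of (\ref{rkm1}), (\ref{KM3a}), and the second half of (\ref{remag}), which is consistent with Examples~\ref{exs:solutiiLong}. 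For commutativity, $R^{12}R^{13}(x,y,z)=R^{12}(x\cdot z,y,x\ast z)=((x\cdot z)\cdot y,(x\cdot z)\ast y,x\ast z)$ and $R^{13}R^{12}(x,y,z)=R^{13}(x\cdot y,x\ast y,z)=((x\cdot y)\cdot z,x\ast y,(x\cdot y)\ast z)$. This yields $(x\cdot z)\cdot y=(x\cdot y)\cdot z$, $(x\cdot z)\ast y=x\ast y$ (which is (\ref{KM3b})), and $x\ast z=(x\cdot y)\ast z$, an instance of (\ref{KM3b}) with the variables renamed. Symmetrically, cocommutativity $R^{13}R^{23}=R^{23}R^{13}$ gives $x\cdot(y\ast z)=x\cdot z$ (which is (\ref{KM3c})), its renamed twin $x\cdot y=x\cdot(z\ast y)$, and $x\ast(y\ast z)=y\ast(x\ast z)$.

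Collecting the identities, the system is exactly (\ref{rkm1}), (\ref{remag}), and (\ref{KM3a})--(\ref{KM3c}), that is, (PM1)--(PM3). The correspondence is componentwise equality of triples, so each direction is an equivalence and both implications follow at once. Consequently the image of ${\rm PBiMag}$ under the (already fully faithful, injective-on-objects) functor is precisely $\mathcal{BLS}$, which gives the isomorphism. As a byproduct this reproves \cite[Theorem 3.10]{ACMv3}, since BLS-solutions are YB-solutions by Remarks~\ref{longbefore}.

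The only real obstacle is bookkeeping. One must compute $R^{13}$ correctly as $R$ acting on the first and third slots, with the output of $\cdot$ landing in slot $1$ and the output of $\ast$ in slot $3$. One must also check that the duplicate identities produced by commutativity and cocommutativity are mere variable renamings of (\ref{KM3b}) and (\ref{KM3c}), and not new constraints. I will verify each of the six compositions carefully and otherwise rely on the fullness remarks above.
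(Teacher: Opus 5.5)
Your proposal is correct and follows essentially the same route as the paper. You reduce, via the bijection between bi-magmas and maps $R$ and the fullness of both subcategories, to showing that $R_{(\cdot,\ast)}$ is a BLS-solution if and only if (PM1)--(PM3) hold; you then expand $R^{12}R^{23}$, $R^{23}R^{12}$, $R^{12}R^{13}$, $R^{13}R^{12}$, $R^{13}R^{23}$, $R^{23}R^{13}$ on $(x,y,z)$ and match components, and your check that the extra components are mere renamings of (\ref{KM3b}) and (\ref{KM3c}) fills in a detail the paper leaves implicit.
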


\begin{proof}
Let $(X, \, \cdot, \, \ast)$ be a bi-magma and $R = R_{(\cdot, \, \ast)} : X \times X \to X \times X$ the associated map given by (\ref{eq:solgenK}).
We will prove that $R$ is a solution of the BLS equation if and only if $(X, \, \cdot, \, \ast)$ is a P{\l}onka bi-magma, thereby completing the proof. Indeed, for all $x$, $y$, $z \in X$ we have:
\begin{equation*}
R^{12} R^{23} (x, \, y, \, z) = \bigl( x\cdot (y \cdot z), \, x\ast (y\cdot z), \, y\ast z \bigl)
\end{equation*}
and
\begin{equation*}
R^{23} R^{12} (x, \, y, \, z) = \bigl( x\cdot y , \, (x\ast y) \cdot z, \, (x \ast y) \ast z \bigl)
\end{equation*}
Hence, $R$ is a solution of the Long equation if and only if the following compatibilities hold
\begin{align}
x\cdot (y \cdot z) &=  x\cdot y\label{KM3aa}\\
(x \ast y) \ast z  &= y \ast z\label{KM3bb} \\
 x\ast (y\cdot z)  &= (x\ast y) \cdot z\label{KM3cc}
\end{align}
for all $x$, $y$, $z \in X$. These are the second identities in (\ref{rkm1}) and (\ref{remag}), together with (\ref{KM3a}), which constitute part of the
definition of a P{\l}onka bi-magma. In the same fashion we have:
\begin{equation*}
R^{12} R^{13} (x, \, y, \, z) = \bigl( (x\cdot z) \cdot y, \, (x \cdot z) \ast y, \, x\ast z \bigl)
\end{equation*}
and
\begin{equation*}
R^{13} R^{12} (x, \, y, \, z) = \bigl( (x\cdot y) \cdot z , \, x\ast y, \, (x \cdot y) \ast z \bigl)
\end{equation*}
Hence, $R$ is commutative if and only if the following compatibilities hold
\begin{equation*}
(x\cdot y) \cdot z = (x\cdot z) \cdot y, \qquad (x \cdot z) \ast y = x \ast y
\end{equation*}
for all $x$, $y$, $z \in X$. These correspond exactly to the first identities of (\ref{rkm1}) and (\ref{KM3b}), which appear in the definition of a P{\l}onka bi-magma. Finally, we can analogously see that $R$ is cocommutative if and only if the following compatibilities hold
\begin{equation*}
x\ast (y \ast z) = y\ast (x \ast z) , \qquad x \cdot (y \ast z) = x \cdot z
\end{equation*}
for all $x$, $y$, $z \in X$. These are precisely the first identities of (\ref{remag}) and (\ref{KM3c}) appearing in the definition of a P{\l}onka bi-magma. Thus, we have proved that $R = R_{(\cdot, \, \ast)}$ is a solution of the BLS equation if and only if $(X, \, \cdot, \, \ast)$ is a P{\l}onka bi-magma. Since both categories are full subcategories of ${\rm BiMag}$, the bijection on objects is automatically an isomorphism of categories,
completing the proof.
\end{proof}

\begin{remark}\label{res:lrcancela}
Let $(X, \, \cdot, \, \ast)$ be a bi-magma. The proof of Theorem~\ref{teorema2} shows that $R = R_{(\cdot, \, \ast)} : X \times X \to X \times X$ is a solution of the Long equation if and only if the compatibility conditions (\ref{KM3aa}) - (\ref{KM3cc}) hold. Hence, if $|X|\ge2$, $(X,\cdot)$ is not left cancellative: otherwise (\ref{KM3aa}) would give
$y\cdot z=y$, i.e.\ every left translation would be constant. In particular, no such solution is
left-right non-degenerate, while right-left non-degenerate BLS-solutions do exist (e.g.\ $R_{(f,g)}$
with $f$, $g$ commuting bijections, see Example~\ref{ex:bmlyb}).
\end{remark}
Combining Theorem~\ref{teorema2} and Theorem~\ref{th:strpbimag}, we obtain the main result of this section: an explicit combinatorial description of all solutions of the BLS equation
in terms of set partitions and maps.

\begin{theorem}\label{th:ultima}
  Let $X$ be a set and $R: X\times X \to X\times X$ a map. The following are equivalent:

  \begin{enumerate}[(a)]

  \item $R: X\times X \to X\times X$ is a solution of the BLS equation.

  \item The associated canonical bi-magma $(X, \, \cdot, \, \ast)$ is a P{\l}onka bi-magma.

  \item There exists a triple $\bigl((X_i)_{i\in I}, \,  \bigl(f_i^j \bigl)_{(i,j) \in I\times I}, \, \bigl(g_i^j \bigl)_{(i,j) \in I\times I} \bigl)$
    consisting of a partition on $X = \coprod_{i\in I} \, X_i$ and two tuples $\bigl(f_i^j \bigl)_{(i,j) \in I\times I}$ and $\bigl(g_i^j \bigl)_{(i,j) \in I\times I}$ of
    functions $f_i^j : X_i \to X_i$ and $g_i^j : X_i \to X_i$ satisfying the commutation relations
        \begin{equation}\label{comfunctddd}
      f_i^j \circ f_i^k = f_i^k \circ f_i^j, \qquad g_i^j \circ g_i^k = g_i^k \circ g_i^j, \qquad g_i^j \circ f_i^k = f_i^k \circ g_i^j.
    \end{equation}
for all $i$, $j$, $k\in I$. Moreover, $R$ is given for all $x_i \in X_i$, $x_j \in X_j$ and $i$, $j\in I$ by
    \begin{equation}\label{solgenbls}
      R(x_i, \, x_j) = \bigl( f_i^j (x_i), \,\,  g_j^i (x_j)\bigl).
    \end{equation}

  \end{enumerate}
\end{theorem}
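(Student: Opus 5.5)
The plan is to obtain the equivalences by chaining the two results already established, with only one small bookkeeping point to check.

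\textbf{(a)$\Leftrightarrow$(b).} First fix the canonical bi-magma attached to $R$: put $x\cdot y := (\pi_1\circ R)(x,y)$ and $x\ast y := (\pi_2\circ R)(x,y)$, so that $R = R_{(\cdot,\,\ast)}$ as in (\ref{notatieR}). The proof of Theorem~\ref{teorema2} established, at the level of objects, exactly this equivalence. There we computed $R^{12}R^{23}$, $R^{23}R^{12}$, $R^{12}R^{13}$, $R^{13}R^{12}$, $R^{13}R^{23}$ and $R^{23}R^{13}$ on an arbitrary triple $(x,y,z)$. Comparing components showed that:
\begin{itemize}
\item the Long equation (\ref{long}) is equivalent to (\ref{KM3aa})--(\ref{KM3cc});
\item commutativity is equivalent to the first identity of (\ref{rkm1}) together with (\ref{KM3b});
\item cocommutativity is equivalent to the first identity of (\ref{remag}) together with (\ref{KM3c}).
\end{itemize}
Together these are precisely (PM1)--(PM3). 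So (a)$\Leftrightarrow$(b) is immediate.

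\textbf{(b)$\Leftrightarrow$(c).} This is a direct invocation of Theorem~\ref{th:strpbimag}. The bi-magma $(X,\cdot,\ast)$ is a P{\l}onka bi-magma if and only if there are a partition $(X_i)_{i\in I}$ and commuting families $(f_i^j)$, $(g_i^j)$ satisfying (\ref{eq:comfunctddd}), with $x_i\cdot x_j = f_i^j(x_i)$ and $x_i\ast x_j = g_j^i(x_j)$. Substituting these into $R(x_i,x_j) = (x_i\cdot x_j,\, x_i\ast x_j)$ gives exactly formula (\ref{solgenbls}). Conversely, if $R$ has the form (\ref{solgenbls}), then reading off its two components recovers the bi-magma (\ref{eq:multRPmdd}). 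Theorem~\ref{th:strpbimag} then shows this bi-magma is P{\l}onka, which gives (b).

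\textbf{The one point to watch.} The only place needing care is matching indices. Formula (\ref{solgenbls}) uses $g_j^i$ applied to the second argument $x_j$, and this agrees with the convention $x_i\ast x_j = g_j^i(x_j)$ in Theorem~\ref{th:strpbimag}. I would also note that the non-injectivity of the correspondence in Theorem~\ref{th:strpbimag} does no harm. Statement (c) asks only for existence of such a triple, not uniqueness, so the equivalence holds as stated.
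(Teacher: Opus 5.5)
Your proposal is correct and follows the paper's own proof, which simply chains Theorem~\ref{teorema2} for (a)$\Leftrightarrow$(b) with the structure theorem, Theorem~\ref{th:strpbimag}, for (b)$\Leftrightarrow$(c). Your index check on $g_j^i$ is accurate, and so is your remark that the non-injectivity of that correspondence is harmless, since (c) only asserts existence.
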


\begin{proof}
The proof follows from Theorem~\ref{teorema2} and the structure theorem of P{\l}onka bi-magmas as given by Theorem~\ref{th:strpbimag}.
\end{proof}
Using Theorem~\ref{th:ultima}, a straightforward computation shows:

\begin{corollary}\label{co:divsol}
Let $R: X\times X \to X\times X$ be a BLS-solution as given by (\ref{solgenbls}). Then:
\begin{enumerate}[(1)]
\item\label{uitaregen} $R$ is unitary if and only if $f_i^j$ and $g_i^j$ are bijective and inverse to each other, for all $i$, $j\in I$.
\item $R$ is involutive if and only if $f_i^j : X_i \to X_i$ and $g_i^j : X_i \to X_i$ are involutions, for all $i$, $j\in I$.
\item $R$ is of diagonal type if and only if $f_i^{i} = g_i^{i} = {\rm Id}_{X_i}$, for all $i\in I$.
\end{enumerate}
\end{corollary}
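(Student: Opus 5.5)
The plan is a direct computation using the explicit form (\ref{solgenbls}) of $R$ from Theorem~\ref{th:ultima}. The one structural fact needed is that each $f_i^j$ and $g_i^j$ maps the block $X_i$ into itself. Hence if $x_i \in X_i$ and $x_j \in X_j$, then $f_i^j(x_i) \in X_i$ and $g_j^i(x_j) \in X_j$, so block indices are preserved when $R$ is iterated or composed with flips. I will assume, as is implicit in the word ``partition'', that every block $X_i$ is nonempty. This matters because each condition on a pair $(i,j)$ is extracted by evaluating at some point of $X_i \times X_j$.

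\textbf{Unitarity.} First compute $R^{21} = \tau_X R \tau_X$. For $a \in X_i$ and $b \in X_j$ we have $R(b,a) = (f_j^i(b), g_i^j(a))$, so
\[
R^{21}(a,b) = \bigl(g_i^j(a), \, f_j^i(b)\bigr).
\]
Since $f_i^j(x_i) \in X_i$ and $g_j^i(x_j) \in X_j$, this gives
\[
R^{21}R(x_i,x_j) = \bigl(g_i^j f_i^j(x_i), \, f_j^i g_j^i(x_j)\bigr).
\]
\begin{itemize}
\item Suppose $R^{21}R = {\rm Id}$. Fix $i,j$ and pick any $x_j \in X_j$. Letting $x_i$ range over $X_i$ gives $g_i^j f_i^j = {\rm Id}_{X_i}$.
\item Applying the same argument with the roles of $i$ and $j$ exchanged, the second coordinate gives $f_i^j g_i^j = {\rm Id}_{X_i}$.
\end{itemize}
So $f_i^j$ and $g_i^j$ are mutually inverse bijections for every $(i,j)$. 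The converse is immediate from the displayed formula.

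\textbf{Involutivity.} Block preservation gives
\[
R^2(x_i,x_j) = \bigl((f_i^j)^2(x_i), \, (g_j^i)^2(x_j)\bigr).
\]
Hence $R^2 = {\rm Id}$ if and only if $(f_i^j)^2 = {\rm Id}_{X_i}$ and $(g_j^i)^2 = {\rm Id}_{X_j}$ for all $i,j$, again using nonemptiness of the other block. Reindexing the second condition, this says exactly that every $f_i^j$ and every $g_i^j$ is an involution.

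\textbf{Diagonal type.} A pair $(x,x)$ always lies in some $X_i \times X_i$, and there $R(x_i,x_i) = (f_i^i(x_i), g_i^i(x_i))$. So $R$ is of diagonal type if and only if $f_i^i = g_i^i = {\rm Id}_{X_i}$ for every $i$.

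\textbf{Main obstacle.} There is no real difficulty here. The only care needed is bookkeeping: computing $R^{21}$ correctly, with the index swap $g_i^j$ versus $g_j^i$, and justifying that the conditions can be read off for every pair $(i,j)$. That last point rests on the blocks being nonempty. I would also note that the statement refers to a fixed representing triple, which is needed because the correspondence of Theorem~\ref{th:strpbimag} is not injective.
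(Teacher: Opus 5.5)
Your proof is correct and is the same approach as the paper, which only says the result follows from a straightforward computation with the explicit form (\ref{solgenbls}) and does not write the computation out. Your points about nonempty blocks and about fixing one representing triple are correct details that the paper leaves implicit.
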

For finite sets, another application of Theorem~\ref{th:ultima} together with \cite[Theorem 3.30 (3)]{ACMv3} yields:

\begin{corollary}\label{co:unitinde}
  Let $X$ be a finite set with $|X| = t$. Then the number of isomorphism classes of all ideal-simple (\cite[Definition 2.1]{ACMv3}) BLS-solutions on $X$ is $\sigma (t)$, the sum of all divisors of $t$.

%They are explicitly described and classified by the set of all triples $(m,\ n,\ d)\in \bZ_{>0}^3$ with $mn = t$ and $d \leq m$, identifying $X = \bZ/m\times \bZ/n$.
\end{corollary}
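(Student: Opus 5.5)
The count will be obtained by transporting the classification of ideal-simple P{\l}onka bi-magmas through the isomorphism of Theorem~\ref{teorema2}. First I would check that this isomorphism ${\rm PBiMag}\cong\mathcal{BLS}$ is the identity on underlying sets. Hence it matches isomorphism classes of BLS-solutions on $X$ bijectively with isomorphism classes of P{\l}onka bi-magma structures on $X$. Under this bijection ideal-simplicity should also match. By \cite[Definition 2.1]{ACMv3} ideals of a YB-solution are phrased in terms of the associated bi-magma, i.e.\ subsets (or congruences) stable under the operations $\cdot$ and $\ast$, which is exactly the data the functor preserves. So the notions coincide on both sides.

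Once this is done, the statement reduces to counting isomorphism classes of ideal-simple P{\l}onka bi-magmas of cardinality $t$. For this I would invoke \cite[Theorem 3.30 (3)]{ACMv3}, which I expect to give exactly this number as $\sigma(t)$. Its parametrisation is by triples $(m,n,d)$ with $mn=t$ and $1\le d\le m$ on $X\cong\bZ/m\times\bZ/n$. Counting these gives $\sum_{m\mid t} m=\sigma(t)$, since for each divisor $m$ of $t$ the integer $n=t/m$ is determined and there are $m$ choices of $d$. Theorem~\ref{th:ultima} supplies the concrete form (\ref{solgenbls}) of the corresponding solutions, should one want to list them explicitly.

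The main obstacle is the bookkeeping in the first step. One must make sure that ``ideal-simple'' for a BLS-solution, viewed as an object of $\mathcal{YB}$, is literally the same condition as ideal-simplicity for the bi-magma used in \cite{ACMv3}, and that isomorphism in $\mathcal{BLS}$ is isomorphism of bi-magmas. The second point holds because both categories are full subcategories of ${\rm BiMag}$ and the functor is the identity on morphisms. For the first, I would unwind \cite[Definition 2.1]{ACMv3} and check that it refers only to the operations $\cdot,\ast$, and hence is invariant under the identification (\ref{notatieR}).
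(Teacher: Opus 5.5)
Your proposal is correct and follows the paper's route. The paper simply combines Theorem~\ref{th:ultima} (equivalently the isomorphism ${\rm PBiMag}\cong\mathcal{BLS}$ of Theorem~\ref{teorema2}) with \cite[Theorem 3.30 (3)]{ACMv3}, and your checks that the isomorphism preserves isomorphism classes and ideal-simplicity, together with the count $\sum_{m\mid t} m=\sigma(t)$ over the triples $(m,n,d)$, spell out the justification the paper leaves implicit.
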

We end this section by introducing a new class of YB-solutions whose corresponding category will be proved to sit between $\mathcal{BLS}$ and the entire category $\mathcal{YB}$.

\subsection{Lyubashenko type solutions}\label{subse:lyb}
We recall the basic example of a YB-solution, attributed to Lyubashenko in \cite[Example 1]{dr}:

\begin{example}\label{ex:bmlyb}
Given two self-maps $f$, $g: X\to X$ the map defined for any $x$, $y\in X$ by
  \begin{equation}\label{eq:lysol}
    R = R_{(f, \, g)} : X\times X \to X\times X, \qquad R (x, \, y) := \bigl(f(x), \, g(y)\bigl)
\end{equation}
is a YB-solution if and only if $f\circ g = g\circ f$. The map $R_{(f, \, g)}$ given by (\ref{eq:lysol}) is called a \emph{Lyubashenko solution} on $X$.  The bi-magma on $X$, associated to $R_{(f, \, g)}$, will be denoted by $X_f^g = (X, \, \cdot_f, \, \ast^g)$, where $x\cdot_f \, y := f(x)$ and $x\ast^g y \, := g(y)$, for all $x$, $y\in X$ and we call it the \emph{Lyubashenko bi-magma} of $f$ and $g$. It can be easily seen that $X_f^g$ is a P{\l}onka bi-magma if $f\circ g=g\circ f$.\\
Obviously, the map $\tau_{X} \circ R_{(f,\, g)}(x,\, y) = (g(y),\, f(x))$ is a solution of the \emph{braid} equation if and only if $f\circ g = g\circ f$. 
\end{example}
Building on Example~\ref{ex:bmlyb}, we introduce the following new class of YB-solutions that generalizes the solution given in (\ref{eq:lysol}).

\begin{definition}\label{def:lymag}
  A YB-solution $(X, \, R) \in \mathcal{YB}$ is called of \emph{Lyubashenko type} if there exists a partition\footnote{We allow the trivial partition $X := X$, so the maps $R_{(f, \, g)}$ as given by (\ref{eq:lysol}) are of Lyubashenko type.} $X = \coprod_{i\in I} \, X_i$ of $X$ and two tuples $\bigl(f_i\bigl)_{i \in I}$ and $\bigl(g_i\bigl)_{i \in I}$ of functions $f_i$, $g_i : X_i \to X_i$ such that for any $i\in I$ and $x$, $y\in X_i$:
\begin{equation}\label{beleaua}
g_i \circ f_i = f_i \circ g_i, \qquad  R(x, \, y) = (f_i(x), \, g_i(y)).
\end{equation}
We denote by $\mathcal{LYB}$ the full subcategory of $\mathcal{YB}$ consisting of all Lyubashenko type solutions of the Yang-Baxter equation.
\end{definition}

\begin{remarks}\label{res:lybvsyb}
  \begin{enumerate}[(1)]
\item\label{co-co} We believe that the category $\mathcal{LYB}$, introduced in Definition~\ref{def:lymag}, provides a useful framework for understanding the category $\mathcal{YB}$ of YB-solutions. A first argument in this sense is that every \emph{diagonal} YB-solution\footnote{In the YB-literature this class of solutions was studied extensively under the name of \emph{square-free} solutions.}, i.e. $R(x, \, x) = (x, \, x)$, for all $x\in X$, is of Lyubashenko type. Indeed, one can simply consider the discrete partition into singletons $X = \coprod_{x\in X} \, \{x\}$ together with $f_x = g_x = {\rm Id}_{\{x\}}$, for all $x\in X$. Note, though, that not all YB-solutions are of Lyubashenko type: see Example~\ref{ex:notlyb} below.

     \item Any solution of the BLS equation is of Lyubashenko type in the sense of Definition~\ref{def:lymag}, i.e. we have a canonical embedding of categories:
  $\mathcal{BLS} \subseteq \mathcal{LYB} \subseteq \mathcal{YB}$. Indeed, by Theorem~\ref{th:ultima} take $f_i:=f_i^i$, $g_i:=g_i^i$. For strictness of $\mathcal{BLS}\subsetneq\mathcal{LYB}$,
  the flip $\tau_X$ ($|X|\ge2$) is diagonal but not a BLS-solution.

  \item\label{povextind} Assume that we have a partition $X = \coprod_{i\in I} \, X_i$ on a given set $X$. We assume, in addition, that we have
    Lyubashenko solutions $R_{(f_i, \, g_i)}$ as given by (\ref{eq:lysol}) on any component $X_i \subseteq X$ of the partition. This naturally leads to the following question:

    \emph{Is there a canonical way to extend the family of YB-solutions $R_{(f_i, \, g_i)}$, defined on the partition components $X_{i}$, to a YB-solution on the whole set $X$?}

  The problem itself is complicated: we need a way of \emph{gluing} these solutions to the whole set $X$ to obtain a YB-solution on $X$.  At first glance,  there seems to be no obvious canonical way to do this; however, for the special case of BLS-solutions, a method to do this is provided by Theorem~\ref{th:ultima}. 
\end{enumerate}
\end{remarks}

\begin{example}\label{ex:notlyb}
A family of (bijective, right-left non-degenerate) YB-solutions of \cite[Theorem 2.5 (i)]{etingof2} is not in $\mathcal{LYB}$. Keeping in mind that we work with solutions of the \emph{Yang-Baxter equation} whereas those displayed in loc.cit. are solutions to the \emph{braid equation}, they are of the form
  \begin{equation}\label{eq:xyh12}
    X^2\ni (x,x')\xmapsto{\quad R\quad}(x-qh,\, q^{-1}x'+(1-q^{-1})x+h)\in X^2
  \end{equation}
where $X = \bZ/p$, a prime-order cyclic group, $p\ge3$, $q\in(\mathbb{Z}/p)^\times$ with $q\neq1$ and $h\neq0$. Every non-empty $R$-invariant $X'\subseteq X$ (i.e. $R(X'\times X')\subseteq X'\times X'$) will be invariant under translation by $-qh$, and hence must be all of $X$. This shows that for such a YB-solution to be of Lyubashenko type, it would have to be a Lyubashenko solution (\ref{eq:lysol}). Clearly, this is not the case as the second component of $R(x,x')$ has a non-zero term depending on $x$ and therefore is {\it not} a function of the second component $x'$ of $(x,x')$.
\end{example}
Example~\ref{ex:notlyb} establishes not only that $\mathcal{LYB}$ is strictly contained in $\mathcal{YB}$, but it also shows that solutions of that form are not connected to $\mathcal{LYB}$ objects by non-trivial morphisms:

\begin{proposition}\label{pr:nomor}
  Consider a YB-solution $(X, \, R)$ of the form (\ref{eq:xyh12}) for prime-order $X = \bZ/p$, $p\ge3$, $q\in(\mathbb{Z}/p)^\times$, $h\ne 0$ and $q\ne 1$. Then:
  \begin{enumerate}[(1)]
  \item\label{L1} $(X, \, R)$ admits no morphisms from a non-empty $\mathcal{LYB}$ object.

  \item\label{L2} The only morphisms from $(X, \, R)$ to $\mathcal{LYB}$ objects factor through the singleton Lyubashenko solution $\{*\}_{\id}^{\id}$ in
  the notation of Example~\ref{ex:bmlyb}.
  \end{enumerate}
\end{proposition}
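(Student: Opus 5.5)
The plan is to work throughout with the bi-magma $(X,\cdot,\ast)$ attached to (\ref{eq:xyh12}), namely
\[
x\cdot y = x-qh, \qquad x\ast y = q^{-1}y+(1-q^{-1})x+h,
\]
and to use that morphisms in $\mathcal{YB}$ are exactly bi-magma morphisms. Two basic observations drive both parts. First, as in Example~\ref{ex:notlyb}, any non-empty sub-bi-magma of $X$ is all of $X$: it is closed under $x\mapsto x-qh$, and $qh\neq0$ generates $\bZ/p$. Second, in an $\mathcal{LYB}$ object $(Y,S)$ with partition $Y=\coprod_i Y_i$ and maps $f_i,g_i$ as in (\ref{beleaua}), each block $Y_i$ is a sub-bi-magma, since $a\cdot b=f_i(a)\in Y_i$ and $a\ast b=g_i(b)\in Y_i$ for $a,b\in Y_i$. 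Moreover, on $Y_i$ the operation $\ast$ depends only on its second argument.

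For (\ref{L1}), suppose $\sigma\colon (Y,S)\to (X,R)$ is a morphism with $Y\neq\emptyset$ in $\mathcal{LYB}$. Pick a non-empty block $Y_i$. The image $\sigma(Y_i)$ is a non-empty sub-bi-magma of $X$, so it equals $X$. Now fix $b\in Y_i$ and choose $a,a'\in Y_i$ with $\sigma(a)\neq\sigma(a')$, which is possible because $|X|=p\ge3$. Then
\[
\sigma(a)\ast\sigma(b)=\sigma(g_i(b))=\sigma(a')\ast\sigma(b).
\]
However, the difference $\sigma(a)\ast\sigma(b)-\sigma(a')\ast\sigma(b)$ equals $(1-q^{-1})(\sigma(a)-\sigma(a'))$, which is nonzero because $q\neq1$ and $p$ is prime. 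This is a contradiction, so no such morphism exists.

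For (\ref{L2}), let $\sigma\colon (X,R)\to (Y,S)$ be a morphism with $(Y,S)\in\mathcal{LYB}$. Fix $x_0\in X$ and let $Y_i$ be the block containing $\sigma(x_0)$. The preimage $\{x : \sigma(x)\in Y_i\}$ is a sub-bi-magma of $X$, because $Y_i$ is one, and it contains $x_0$; hence it is all of $X$. Consequently $\sigma(x\ast y)=g_i(\sigma(y))$ for all $x,y\in X$. In particular, $\sigma\bigl(q^{-1}y+(1-q^{-1})x+h\bigr)$ does not depend on $x$. Since $1-q^{-1}$ is invertible, letting $x$ vary makes the argument run over all of $\bZ/p$, so $\sigma$ is constant, with value $c$ say. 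The morphism property then gives $f_i(c)=\sigma(x\cdot y)=c$ and $g_i(c)=c$. Therefore $\{*\}\to Y$, $*\mapsto c$, is a morphism from $\{*\}_{\id}^{\id}$, and $\sigma$ factors as $X\to\{*\}\to Y$.

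The main point needing care is the passage from "image (or preimage) is a subalgebra" to "it is everything". Specifically, one must check that blocks of an $\mathcal{LYB}$ partition are genuinely closed under both operations, which is immediate from (\ref{beleaua}). After that, the non-degeneracy coming from $q\neq1$ finishes both parts.
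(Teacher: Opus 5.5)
Your proof is correct and follows the paper's argument. In (2) it matches the paper step for step: the block containing one image point must contain all of $\sigma(X)$ because of translation by $-qh$; the $\ast$-relation with $1-q^{-1}$ invertible forces $\sigma$ to be constant; and the constant value is fixed by $f_i$ and $g_i$. In (1) you use the same two ingredients in the opposite order, getting the contradiction from $\sigma(Y_i)=X$ against $\ast$ not depending on its first argument, whereas the paper gets it from $\theta$ being constant against $R(x_0,x_0)\neq(x_0,x_0)$. Your explicit remark that the blocks of an $\mathcal{LYB}$ partition are sub-bi-magmas also justifies the paper's brief reduction to a single Lyubashenko block.
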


\begin{proof}
(1) Because (by definition) $\mathcal{LYB}$ restrict to Lyubashenko solutions, we may as well assume that there is a morphism
$$Y\xrightarrow{\quad\theta\quad}X$$
 from one such, say $Y_f^g$ (in the notation of Example~\ref{ex:bmlyb}), to $(X,R)$. The constraint that $\theta$ intertwines the two solutions means, in particular, that
    \begin{equation*}
q^{-1}\theta(y')+(1-q^{-1})\theta(y)+h=\theta\bigl(g(y')\bigl),\quad {\rm for\,\,all}\,\, y,\, y'\in Y
    \end{equation*}
    Since $1-q^{-1}$ is invertible because we assumed $q\ne1$ this immediately implies that $\theta$ must be constant, with single value, say, $x_{0} \in X$. But then
    \begin{equation*}
      (x_{0},x_{0}) = R(x_{0},x_{0}) = (x_{0} - qh,\, q^{-1}x_{0}+(1-q^{-1})x_{0}+h),
    \end{equation*}
would, in particular, imply $qh = 0$, contradicting the assumptions $h\ne0$ and $q\in(\mathbb{Z}/p)^\times$. \\
(2) This time, assume we have a morphism
    \begin{equation*}
      X\xrightarrow{\quad\theta\quad}Y
    \end{equation*}
    to an $\mathcal{LYB}$ object admitting a partition
    \begin{equation*}
      Y=\coprod_i Y_i,\quad Y_i\xrightarrow{\quad f_i,\ g_i\quad} Y_i
    \end{equation*}
    as in Definition~\ref{def:lymag}. If, say, $\theta(x)\in Y_i$, then the solution-intertwining property yields 
     \begin{equation*}
      \theta(x-qh) = f_i(\theta(x))\in Y_i
      \end{equation*}
    Because $h \neq 0$ and $q\in(\mathbb{Z}/p)^\times$, the translation by $-qh$ generates $\mathbb{Z}/p$ and therefore the image $\theta(X)$ is entirely contained in (one of the) $Y_i$. We can thus assume without loss of generality that $Y=Y_f^g$ was a Lyubashenko solution to begin with. The solution-intertwining property implies, in particular, that
    \begin{equation*}
    \theta\bigl(q^{-1}x' +(1-q^{-1})x+h\bigr)=g\bigl(\theta(x')\bigr),\quad {\rm for \,\,all}\,\, x,\,x' \in X.
         \end{equation*}
    This implies that $\theta$ is constant, and its single value $y_{0} \in Y$ must be a fixed point of both $f$ and $g$. We thus have
    \begin{equation*}
      \left\{\{y_{0}\},\ f|_{\{y_{0}\}},\ g|_{\{y_{0}\}}\right\}\cong \{*\}_{\id}^{\id},
    \end{equation*}
    finishing the proof.
\end{proof}
As an immediate consequence, it follows that the subcategory $\mathcal{LYB}$ of $\mathcal{YB}$ is \emph{too small} to recover the ambient $\mathcal{YB}$ in any of the 
standard category-theoretic senses of the word:

\begin{corollary}\label{cor:nolimcolim}
The $\mathcal{YB}$ objects of Example~\ref{ex:notlyb} are neither limits nor colimits taken in $\mathcal{YB}$ of diagrams in $\mathcal{LYB}$.
\end{corollary}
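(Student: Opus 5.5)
The plan is to derive both halves of the corollary from Proposition~\ref{pr:nomor}, using only the structure of limit and colimit cones.

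Let $(X,R)$ be one of the solutions of Example~\ref{ex:notlyb}, and let $D\colon J\to\mathcal{LYB}\subseteq\mathcal{YB}$ be a diagram.

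\emph{Colimits.} Suppose, for a contradiction, that $(X,R)\cong\operatorname{colim}D$ in $\mathcal{YB}$.
\begin{itemize}
\item If some $D(j)$ is non-empty, the coprojection $D(j)\to X$ is a morphism from a non-empty $\mathcal{LYB}$ object into $(X,R)$. This is excluded by Proposition~\ref{pr:nomor}(\ref{L1}).
\item Otherwise every $D(j)$ is empty (this includes $J=\emptyset$). Then $\operatorname{colim} D$ is the initial object of $\mathcal{YB}$, namely the empty solution. Indeed, colimits in the variety ${}_{\rm YB}{\rm BiMag}$ are quotients of free algebras, and the free algebra on the empty set is empty because there are no constants.
\end{itemize}
In both cases we contradict $X\neq\emptyset$.

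\emph{Limits.} Suppose $(X,R)\cong\lim D$ with projections $p_j\colon X\to D(j)$.
\begin{itemize}
\item By Proposition~\ref{pr:nomor}(\ref{L2}), each $p_j$ factors through the singleton solution $\{*\}^{\id}_{\id}$. Hence each $p_j$ is constant, and its value $y_j$ is fixed by the relevant $f_i,g_i$, so $\{y_j\}$ is a subsolution of $D(j)$.
\item The family $(y_j)$ is compatible with the transition maps, because all the $p_j$ are constant and form a cone. So the one-point solution $\{*\}$ also carries a cone $*\mapsto y_j$ over $D$.
\item By the universal property there is a unique morphism $u\colon\{*\}\to X$ with $p_j u = (*\mapsto y_j)$. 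This $u$ is a morphism from a non-empty $\mathcal{LYB}$ object (the singleton solution is a Lyubashenko solution) into $(X,R)$.
\end{itemize}
This again contradicts Proposition~\ref{pr:nomor}(\ref{L1}).

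One can also argue more directly. The composite $X\to\{*\}\to X$ gives an endomorphism of the limit that agrees with $\mathrm{id}_X$ after every $p_j$, so by uniqueness it equals $\mathrm{id}_X$. That would force $|X|=1$, while $|X|=p\ge3$. The first argument suffices, however.

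The only delicate step I expect is the edge case of empty diagrams. An empty diagram has the terminal object $\{*\}^{\id}_{\id}$ as its limit and the empty solution as its colimit, and neither is isomorphic to $X$. I also need to check that the singleton solution, with $R=\mathrm{id}$, genuinely lies in $\mathcal{LYB}$; this holds via the trivial partition.
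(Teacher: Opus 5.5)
Your proposal is correct and is essentially the argument the paper has in mind: the paper presents Corollary~\ref{cor:nolimcolim} as an immediate consequence of Proposition~\ref{pr:nomor}. Part~(1) rules out non-empty coprojections into $(X,R)$. Part~(2) forces the limit projections to be constant, so the induced map from the singleton Lyubashenko solution contradicts part~(1). Your handling of the edge cases (empty diagrams, and diagrams all of whose objects are empty), which the paper leaves implicit, is also correct.
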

%%%%%%%%%%%%%%%%%%%%%%%%%%%%%%%%%%%%%%%%%%%%%%%%%%%%%%%%%%%%%%%%
%%%%%%%%%%%%%%%%%%%%%%%%%%%%%%%%%%%%%%%%%%%%%%%%%%%%%%%%%%%%%%%%

\section{A categorical framework around the Yang-Baxter (resp. braid) equation}\label{se:freyd}

This section develops a categorical framework for the Yang-Baxter (resp. braid) equation. We begin by revisiting the intensively studied class of left-right non-degenerate solutions of the braid equation (simply called \emph{non-degenerate solutions} in the literature \cite{etingof}).  A fundamental classification result for this class of solutions was established in \cite[Theorem 2.10]{etingof} where an equivalence between the category of left-right non-degenerate, bijective, and involutive solutions of the braid equation (called \emph{non-degenerate symmetric
sets} in loc. cit.) and the category of bijective cocycle quadruples is established. \\
Going beyond the setting considered in \cite{etingof} by removing the involutivity assumption, another notion that has attracted considerable attention in the study of left-right non-degenerate and bijective solutions of the braid equation over the past decade is that of a \emph{skew left brace}. Recall \cite[Definition 1.1]{smo_ven} that a \emph{skew left brace} is a bi-magma $(X, \, \cdot, \, \ast)$ such that $(X, \, \cdot)$ and $(X, \, \ast )$ are both groups and the following relation holds for all $x$, $y$, $z \in X$
\begin{equation}\label{compbrac}
x \ast  (y\cdot z) = (x \ast  y) \cdot x^{-1} \cdot (x \ast  z)
\end{equation}
where $x^{-1}$ denotes the inverse of $x$ in the group $(X, \, \cdot)$; the inverse of $x$ in the group $(X, \, \ast)$ will be denoted by $\overline{x}$.  We denote by ${\rm SLB}$ the full subcategory of ${\rm BiMag}$ consisting of all skew left braces.
If $(X, \, \cdot, \, \ast)$ is a skew left brace then the map defined for any $x$, $y\in X$ by:
\begin{equation}\label{solbrac}
\mathcal{R} = \mathcal{R}^{(\cdot, \, \ast)} : X \times X \to X \times X, \qquad
\mathcal{R} (x, \, y) :=  \bigl( x^{-1} \cdot (x\ast y), \, \overline{x^{-1} \cdot (x\ast y)} \ast  x \ast y \bigl)
\end{equation}
is a (left-right) non-degenerate and bijective solution of the braid equation \cite[Theorem 3.1]{vendramin1} whose inverse is precisely $\mathcal{R}^{(\cdot^{\rm op}, \, \ast)}$, where $(X, \, \cdot^{\rm op}, \, \ast)$ denotes the opposite skew left brace (\cite[Theorem 4.1]{koch}).  Remark that the map $\mathcal{R}^{(\cdot, \, \ast)}$ as defined by (\ref{solbrac}) is different from the canonical map associated to the bi-magma $(X, \, \cdot, \, \ast)$ in (\ref{notatieR}). Furthermore, we have a functor
\begin{equation}\label{sol_skew_br}
  \mathcal{S}: {\rm SLB} \to \mathcal{BR}^{b,n}, \qquad  \mathcal{S} \bigl ( (X, \, \cdot, \, \ast) \bigl) := (X, \, \mathcal{R}^{(\cdot, \, \ast)}), \quad  \mathcal{S}(\sigma) := \sigma
\end{equation}
That $ \mathcal{S}$ is indeed a functor follows readily by observing that any morphism of skew left braces preserves both group operations and their inverses, and therefore it obviously intertwines the induced solutions $\mathcal{R}^{(\cdot, \, \ast)}$ and $\mathcal{R}^{(\cdot ', \, \ast ')}$.

\begin{remark}\label{re:post-groups}
The concept of a skew left brace is one of several ways of recapturing a notion that already exists in the literature, namely that of a classical \emph{bijective $1$-cocycle}.
More precisely, the category ${\rm SLB}$ of skew left braces is equivalent to each of the following categories:

(1) \emph{bijective $1$-cocycles} with coefficients in a group, considered in \cite{etingof};

(2) \emph{braiding operators} on a group, as defined in \cite{LuYZ};

(3) \emph{braided groups} in the sense of \cite{tak_2001};

(4) \emph{post-groups}, introduced more recently in \cite{bai_guo}.  \\
For full details we refer to \cite[Theorem 2]{LuYZ}, \cite[Proposition 1.11]{vendramin1} and \cite[Theorems 3.18 and 3.25]{bai_guo}. Further properties of the category ${\rm SLB}$  have been studied in \cite{DBAFMP}.
\end{remark}
Our first result shows that there exist many bijective and (left-right) non-degenerate solutions of the braid equation that are not isomorphic to any
solution of the form $\mathcal{R}^{(\cdot,\ast)}$ associated with a skew left brace.

\begin{proposition}\label{prop_im}
The essential image\footnote{That is, the full subcategory of $\mathcal{BR}^{b,n}$ consisting
of all objects isomorphic to some $\mathcal{S} \bigl ( (X, \, \cdot, \, \ast) \bigl)$.} of the functor $\mathcal{S}: {\rm SLB} \to \mathcal{BR}^{b,n}$
is a proper subcategory of $\mathcal{BR}^{b,n}$.
\end{proposition}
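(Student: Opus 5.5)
The plan is to find an isomorphism invariant shared by every solution in the image of $\mathcal{S}$, and then write down an explicit bijective, left-right non-degenerate solution of the braid equation that lacks it. The invariant will be the existence of a "neutral" element $e\in X$ such that
\begin{equation*}
\mathcal{R}(e,\, y) = (y,\, e), \qquad \mathcal{R}(x,\, e) = (e,\, x), \qquad \text{for all } x,\, y\in X .
\end{equation*}

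\textbf{Step 1: the invariant holds on the image of $\mathcal{S}$.} Let $(X,\cdot,\ast)$ be a skew left brace and let $1$ be its common identity. Setting $x=1$ in (\ref{compbrac}) shows that the identity of $(X,\ast)$ acts on the left through the $\cdot$-structure. The standard consequence is that the identities of $(X,\cdot)$ and $(X,\ast)$ coincide. Then (\ref{solbrac}) gives
\begin{equation*}
\mathcal{R}(1,y) = \bigl(1^{-1}\cdot(1\ast y),\, \overline{y}\ast 1\ast y\bigr) = (y,1), \qquad \mathcal{R}(x,1) = \bigl(x^{-1}\cdot x,\, \overline{1}\ast x\ast 1\bigr) = (1,x).
\end{equation*}
So $e=1$ works. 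The property is preserved by isomorphisms in $\mathcal{BR}$: if $\sigma$ is a bijection intertwining $\mathcal{R}$ and $\mathcal{R}'$, then $\sigma(e)$ is neutral for $\mathcal{R}'$. Hence every object of the essential image has a neutral element.

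\textbf{Step 2: a braid solution with no neutral element.} Take the braid version of a Lyubashenko solution from Example~\ref{ex:bmlyb}: $\mathcal{R}(x,y) = \bigl(g(y),\, f(x)\bigr)$ with $f,g$ commuting bijections of $X$. This is a solution of the braid equation, and it is bijective with inverse $(u,v)\mapsto\bigl(f^{-1}(v),\, g^{-1}(u)\bigr)$.

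Its associated bi-magma has $x\cdot y = g(y)$ and $x\ast y = f(x)$. Each left translation of $\cdot$ is the bijection $g$, and each right translation of $\ast$ is the bijection $f$. By Remark~\ref{res:multenon}, $\mathcal{R}$ is therefore left-right non-degenerate, so it is an object of $\mathcal{BR}^{b,n}$.

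Now suppose $e$ were neutral. Then $\mathcal{R}(e,y) = (g(y), f(e)) = (y,e)$ for all $y$ forces $g = {\rm Id}_X$. Likewise $\mathcal{R}(x,e) = (g(e), f(x)) = (e,x)$ for all $x$ forces $f = {\rm Id}_X$. Consequently, whenever $(f,g)\neq({\rm Id}_X,{\rm Id}_X)$, this solution is not isomorphic to any $\mathcal{S}\bigl((X,\cdot,\ast)\bigr)$. For instance, take $X=\bZ/n$ with $n\ge 2$, $f={\rm Id}_X$ and $g(y)=y+1$; this gives a large family of examples.

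\textbf{Main obstacle.} The only delicate point is Step 1: justifying that the two groups of a skew left brace share their identity, and computing (\ref{solbrac}) at $1$ correctly. This is standard, and I would either cite it from \cite{smo_ven} or verify it directly from (\ref{compbrac}) with $x=1$. Everything else is a routine check.
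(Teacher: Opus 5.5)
Your proof is correct, but it uses a different invariant from the paper's.

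\textbf{The paper's route.} It only observes that $\mathcal{R}^{(\cdot,\,\ast)}(1,1)=(1,1)$. Hence any solution isomorphic to one coming from a skew left brace has a diagonal fixed point $R(x_0,x_0)=(x_0,x_0)$. It then takes $S(x,y)=\bigl(f(y),\,f(x)\bigr)$ with $f$ a fixed-point-free bijection, which is exactly the special case $g=f$ of your family.

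\textbf{Your route.} You use the stronger invariant of a two-sided neutral element $e$ with $\mathcal{R}(e,y)=(y,e)$ and $\mathcal{R}(x,e)=(e,x)$. This excludes every solution $(x,y)\mapsto\bigl(g(y),\,f(x)\bigr)$ with $(f,g)\neq({\rm Id}_X,{\rm Id}_X)$. That includes solutions which do have diagonal fixed points, for instance $f={\rm Id}_X$ with $g$ a non-trivial permutation having a fixed point. The paper's fixed-point test cannot detect these.

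\textbf{The trade-off.} Your transport step genuinely needs $\sigma$ to be surjective, in order to conclude $\mathcal{R}'(\sigma(e),y')=(y',\sigma(e))$ for every $y'$; so it only works for isomorphisms. The paper's argument works for any morphism $\sigma\colon\mathcal{S}(B)\to S$ in $\mathcal{BR}$. It therefore shows slightly more about its examples: they receive no morphisms at all from solutions induced by skew left braces.

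\textbf{A minor point in Step 1.} Your sentence about setting $x=1$ in (\ref{compbrac}) is garbled. The clean check is to put $x=1_\ast$, the identity of $(X,\ast)$. This gives $y\cdot z=y\cdot 1_\ast^{-1}\cdot z$ for all $y,z$, hence $1_\ast=1$. Alternatively, cite the standard lemma, as the paper does.
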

\begin{proof}
Let $(X, \, \cdot, \, \ast)$ be a skew left brace and consider $\mathcal{R}^{(\cdot, \, \ast)}$ to be the induced solution of the braid equation defined by~(\ref{solbrac}). Recall that the two group structures underlying the skew left brace $(X, \, \cdot, \, \ast)$ share the same identity element \cite[Lemma 1.7 (1)]{vendramin1} which we will denote simply by $1$. Now a straightforward computation shows that $\mathcal{R}^{(\cdot, \, \ast)}(1,\, 1) = (1,\, 1)$. Furthermore, if $R$ is a solution of the braid equation isomorphic in $\mathcal{BR}$ to a solution of the form $\mathcal{R}^{(\cdot, \, \ast)}$ for a given skew left brace $(X, \, \cdot, \, \ast)$, then there exists some $x_{0} \in X$ such that $R(x_{0},\, x_{0}) = (x_{0},\, x_{0})$. Indeed, if $\sigma \colon \mathcal{R}^{(\cdot, \, \ast)} \to R$ is the said isomorphism in $\mathcal{BR}$ we have $$(\sigma \times \sigma) \circ \mathcal{R}^{(\cdot, \, \ast)}(1,\, 1) = R\bigl(\sigma(1),\sigma(1)\bigr)$$ which comes down to $(x_{0},\, x_{0}) = R(x_{0},\, x_{0})$ if we denote $x_{0} = \sigma(1)$. In order to conclude our claim, it suffices to construct a bijective and (left-right) non-degenerate solution $(X,\,R)$ of the braid equation such that $R(x,\, x) \neq (x,\, x)$ for all $x \in X$.   \\
To this end, let $f \colon X \to X$ be a bijective self-map on the set $X$ which admits no fixed points. Then the map $S \colon X \times X \to X \times X$ defined by $S(x,\,y) = \bigl(f(y),\, f(x)\bigl)$ is a bijective, left-right non-degenerate solution of the braid equation (see e.g. Example~\ref{ex:bmlyb}). Furthermore, as $f$ admits no fixed points we have $S(x,\, x) \neq (x,\, x)$ for all $x \in X$, as desired. 
 \end{proof}
As mentioned in the introduction, the Yang--Baxter literature already contains several categorical results, while others admit a categorical interpretation that has remained unexplored. For example, the bijective correspondence between cycle sets and left non-degenerate unitary YB-solutions established in \cite[Proposition 1]{rump} is, in fact, an isomorphism of categories, as we will show below. \\
We recall that a magma $(X, \, \cdot)$ is called a \emph{cycle set} \cite[Section 4]{rump} if it is a left quasi-group and the following condition holds for any $x$, $y$, $z\in X$
\begin{equation}\label{cycle}
(x \cdot y) \cdot (x \cdot z) = (y \cdot x) \cdot (y \cdot z).
\end{equation}
The full subcategory of ${\rm Mag}$ consisting of all cycle sets will be denoted by $\mathcal{CY}$. 
\begin{proposition}\label{cyc_iso}
The category $\mathcal{CY}$ of cycle sets is isomorphic to the full subcategory $\mathcal{YB}^{\rm ln, u}$ of $\mathcal{YB}$ consisting of all left non-degenerate unitary YB-solutions.
\end{proposition}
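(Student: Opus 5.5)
The plan is to take the object-level bijection from Rump's correspondence \cite[Proposition 1]{rump} and to add what is new here: the assignment is functorial in both directions, and the two functors are mutually inverse on morphisms. Everything is phrased in the bi-magma language of (\ref{notatieR}). By Remark~\ref{res:multenon}, a YB-solution $R=R_{(\cdot,\,\ast)}$ is left non-degenerate exactly when every right translation $r^{\cdot}_y\colon x\mapsto x\cdot y$ is bijective.

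\textbf{Step 1: unitarity determines $\ast$ from $\cdot$.} Write $R(x,y)=(a,b)$ with $a=x\cdot y$ and $b=x\ast y$. Unitarity, $R^{21}R={\rm Id}$, says $R(b,a)=(y,x)$, so $b\cdot a=y$ and $b\ast a=x$. Since $r^{\cdot}_{x\cdot y}$ is bijective, this gives
\[
x\ast y=(r^{\cdot}_{x\cdot y})^{-1}(y).
\]
Hence an object of $\mathcal{YB}^{\rm ln,u}$ is determined by its right quasi-group $(X,\cdot)$.

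\textbf{Step 2: the functor $\Phi\colon\mathcal{YB}^{\rm ln,u}\to\mathcal{CY}$.} Define the operation $x\circ y:=(r^{\cdot}_x)^{-1}(y)$, adjusted if necessary by composing with $\tau_X$ to match Rump's convention for cycle sets. Then $(X,\circ)$ is a left quasi-group, because each left translation $y\mapsto x\circ y$ is the bijection $(r^{\cdot}_x)^{-1}$. The cycle identity (\ref{cycle}) is exactly Rump's translation of (\ref{YB1})--(\ref{YB3}) under unitarity, so I would quote \cite[Proposition 1]{rump} for it. Conversely, $\Psi\colon\mathcal{CY}\to\mathcal{YB}^{\rm ln,u}$ sends $(X,\circ)$ to the bi-magma with $\cdot$ obtained by inverting the left translations of $\circ$, and with $\ast$ given by the formula of Step 1. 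Rump's result guarantees that this is a unitary left non-degenerate YB-solution and that $\Phi$ and $\Psi$ are mutually inverse on objects.

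\textbf{Step 3: morphisms, where the categorical content lies.} Let $\sigma\colon X\to X'$ be a map with $\sigma(x\cdot y)=\sigma(x)\cdot'\sigma(y)$. Then $\sigma\circ r^{\cdot}_y=r^{\cdot'}_{\sigma(y)}\circ\sigma$. Since both translations are bijective, this also gives
\[
\sigma\circ(r^{\cdot}_y)^{-1}=(r^{\cdot'}_{\sigma(y)})^{-1}\circ\sigma .
\]
So $\sigma$ preserves $\circ$, and by the formula of Step 1 it also preserves $\ast$. The same argument applied to the left translations of a cycle set shows that any $\circ$-morphism preserves $\cdot$ and $\ast$.

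Consequently, for objects on either side, a map $\sigma$ is a morphism in $\mathcal{YB}$ if and only if it preserves $\cdot$ alone, if and only if it is a morphism of cycle sets. Thus $\Phi$ and $\Psi$ are the identity on underlying maps, send morphisms to morphisms, and are mutually inverse, which gives the isomorphism of categories.

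\textbf{Main obstacle.} The only genuinely computational point is matching conventions: Rump works with left translations and a specific orientation of $R$, so one has to check which of $r^{\cdot}_x$ or $\tau_X$-twisted variants realises the cycle identity. I would first try $x\circ y=(r^{\cdot}_x)^{-1}(y)$ and verify (\ref{cycle}) directly from (\ref{YB1}) together with unitarity. If that fails, I would switch to the opposite operation. This choice affects neither Step 1 nor Step 3.
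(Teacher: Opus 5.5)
Your proposal is correct and follows the paper's proof. As in the paper, you quote Rump's Proposition 1 for the bijection on objects, and you handle morphisms by noting that a map preserving an operation with bijective translations also preserves the inverse translations, which is exactly the paper's uniqueness argument for $a(x,x')$ and $b(x,x')$. Your first choice $x\circ y=(r^{\cdot}_x)^{-1}(y)$ is precisely the paper's $x\star y=b(x,y)$, and no $\tau_X$-adjustment is needed: with $\sigma_x=(r^{\cdot}_x)^{-1}$, (\ref{YB1}) together with your Step 1 identity $y\ast z=(y\cdot z)\circ z$ gives $\sigma_{z\circ w}\sigma_z=\sigma_{w\circ z}\sigma_w$, which is the cycle identity.
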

\begin{proof}
Given a left quasi-group $(X,\, \star)$ and $x$, $x' \in X$ we denote by $a(x,\, x') \in X$ the unique element satisfying $x \star a(x,\, x') = x'$.
Similarly, for a right quasi-group $(X,\, \cdot)$ and $x$, $x' \in X$ we denote by $b(x,\, x') \in X$ the unique element satisfying $b(x,\, x') \cdot x = x'$. We can now define the following functors 
\begin{align*}
& F \colon \mathcal{CY} \to \mathcal{YB}^{\rm ln, u},\quad F(X,\, \star) = (X,\, R_{\star}),\quad R_{\star}(x,\, y) := \bigl(a(y,\, x),\, a(y,\, x) \star y \bigl), \quad F(f) = f\\
& G \colon \mathcal{YB}^{\rm ln, u} \to \mathcal{CY},\quad G(X,\, R) = (X,\, \star), \quad x \star y := b(x,\, y), \quad G(g) = g
\end{align*}
Note that $b$ is well defined since left non-degeneracy of $R = R_{(\cdot,\, \ast)}$ makes $(X,\, \cdot)$ a right quasi-group. Moreover, by \cite[Proposition 1]{rump}, $F$, $G$ are well-defined on objects and mutually inverse. We are left to prove that both functors are well-defined on morphisms.\\
To this end, let $f \colon (X,\, \star) \to (Y, \bullet)$ be a morphism in $\mathcal{CY}$. We show that $f$ is also a morphism in $\mathcal{YB}^{\rm ln, u}$ between the induced YB-solutions
\begin{eqnarray*}
R_{\star}(x,\, x') = \bigl(a_{1}(x',\, x),\, a_{1}(x',\, x) \star x'\bigl) \,\,\, {\rm and}\,\,\, R_{\bullet}(y,\, y') = \bigl(a_{2}(y',\, y),\, a_{2}(y',\, y) \bullet y'\bigl)
\end{eqnarray*}
where $a_{1}(x',\, x) \in X$ and $a_{2}(y',\, y)\in Y$ denote the unique elements satisfying $x' \star a_{1}(x',\, x) =x$ and $x' \bullet a_{2}(y',\, y) = y$, respectively.
Applying $f$ to the defining relation $x' \star a_{1}(x',\, x) =x$ yields:
$$
f(x)\, = \, f\bigl(x' \star a_{1}(x',\, x)\bigl) \, =\, f(x') \bullet f\bigl(a_{1}(x',\, x)\bigl)
$$
where the last equality holds because $f$ is a morphism in $\mathcal{CY}$. This shows that $f\bigl(a_{1}(x',\, x)\bigl)$ solves the equation defining $a_{2}(f(x'),\, f(x))$ and therefore, by uniqueness, we obtain
\begin{equation}\label{C60}
f(a_{1}(x',\, x)) = a_{2}(f(x'),\, f(x))
\end{equation}
Furthermore, having in mind that $f$ is a morphism in $\mathcal{CY}$ we arrive at:
\begin{equation}\label{C61}
f(a_{1}(x',\, x) \star x') \, =\, f\bigl(a_{1}(x',\, x)\bigl) \bullet f(x') \, \stackrel{(\ref{C60})}{=} \, a_{2}(f(x'),\, f(x)) \bullet f(x')
\end{equation}
Now (\ref{C60}) and (\ref{C61}) show that $f$ is a morphism in $\mathcal{YB}^{\rm ln, u}$ from $F(X,\, \star) = R_{\star}$ to $F(Y,\, \bullet) = R_{\bullet}$:
\begin{align*}
(f \times f) \circ R_{\star}(x,\, x') &\,=\, \Bigl(f\bigl(a_{1}(x',\, x)\bigl),\, f\bigl(a_{1}(x',\, x) \star x'\bigl)\Bigl)\\
&\,=\, \Bigl(a_{2}\bigl(f(x'),\, f(x)\bigl),\, a_{2}\bigl((f(x'),\, f(x)\bigl) \bullet f(x')\bigl)\Bigl) \\
&\,=\, R_{\bullet} \circ(f \times f)(x,\, x')
\end{align*}
Assume now that $g \colon (X,\, R) \to (Y,\, S)$ is a morphism in $\mathcal{YB}^{\rm ln, u}$, where $R = R_{(\cdot,\, \ast)}$ and $S = S_{(\cdot',\, \ast')}$ are the YB-solutions induced by the bi-magmas $(X,\, \cdot, \, \ast)$ and $(Y,\, \cdot', \, \ast')$, respectively. Write $G(X,\, R) =  (X,\, \star)$ and $G(Y,\, S) = (Y,\, \star')$, where
\begin{equation}
x \star x' = b_{1}(x,\, x'), \quad y \star' y' = b_{2}(y,\, y'),\quad {\rm for\,\, all} \,\,x,\,x' \, \in \,X\,\, {\rm and}\,\, y,\, y'\, \in\, Y. \label{C80}
\end{equation}
Recall that $b_{1}(x,\, x') \in X$ and $b_{2}(y,\, y') \in Y$ are the unique elements satisfying $b_{1}(x,\, x') \cdot x = x'$ and $b_{2}(y,\, y') \cdot' y = y' $, respectively. Applying $g$, which is a morphism in $\mathcal{YB}^{\rm ln, u}$ and therefore a morphism between the associated bi-magmas, to the equation defining $b_{1}(x,\, x')$ yields
$$
g(x') \,=\, g\bigl(b_{1}(x,\, x') \cdot x\bigl)\, = \, g\bigl(b_{1}(x,\, x')\bigl) \cdot' g(x)
$$
This shows that $g\bigl(b_{1}(x,\, x')\bigl)$ solves the equation defining $b_{2}(g(x),\, g(x'))$ and therefore by uniqueness we get $g(b_{1}(x,\, x')) = b_{2}\bigl(g(x),\, g(x')\bigl)$.
Using (\ref{C80}) the last equality comes down to $g(x \star x') = g(x) \star' g(x')$ which proves that $g: (X,\, \star) \to (Y,\, \star')$ is a morphism in $\mathcal{CY}$, finishing the proof.
\end{proof}
Freyd's theorem (Corollary~\ref{cor:Freyd}) provides a framework for constructing \emph{universal objects} associated with the Yang-Baxter equation. We believe it deserves special attention for two reasons. First, one of the few constructions of this nature in the Yang-Baxter literature was established by Lu, Yan and Zhu \cite[Theorem 9]{LuYZ} for the category of Takeuchi's \emph{braided groups} \cite{tak_2001} (see also \cite[Definition 3.11]{bai_guo}). It was subsequently reformulated in the equivalent language of skew left braces by Smoktunowicz and Vendramin \cite[Theorem 4.5]{smo_ven} and later extended to the category of \emph{post-groups} by Bai, Guo, Sheng and Tang \cite[Theorem 3.20]{bai_guo}. Second, and more importantly, the existence of universal objects with a certain property in a given category (in our case, categories related to the Yang-Baxter equation), is a well-known tool for understanding the structure of the category itself. To begin with, we denote by:
\begin{equation*}
  \mathcal{A} :=  \{ {\rm Set}, \,  {\rm Mag}, \, {\rm BiMag}, \, {}_{\rm YB}{\rm BiMag}, \, \mathcal{YB}, \, \mathcal{BR}, \,
  {\rm Mag}^{\rm rP}, \, {\rm PBiMag}, \, \mathcal{BLS}, \, {\rm SLB}, \, {\rm PG} \}
\end{equation*}
the categories that are used throughout this paper, where ${\rm SLB}$ (resp. ${\rm PG}$) is the full subcategory of ${\rm BiMag}$ consisting of all
skew left braces \cite[Definition 1.1]{smo_ven} (resp. post-groups \cite[Definition 2.1]{bai_guo}). All the categories belonging to $\mathcal{A}$ are either varieties of
algebras or are equivalent to a variety of algebras. Thus, Corollary~\ref{cor:Freyd} applies to them as equivalences of categories obviously preserve adjoint functors.\\
However, there is a notable exception in the above picture: the category $\mathcal{LYB}$ of Lyubashenko type solutions of the Yang-Baxter equation (Definition~\ref{def:lymag}) is not a variety of algebras. A rigorous proof of this statement is beyond the scope of this paper and will not be included here. The underlying intuition behind the proof is that a partition of a set cannot be thought of as $n$-ary operations.\\
Freyd's theorem applied to any pair of categories of $\mathcal{A}$ reads:

\begin{corollary}\label{co:metafreyd}
  Let $\mathcal{C}$, $\mathcal{D} \in \mathcal{A}$ be two categories belonging to $\mathcal{A}$. Then any functor $F: \mathcal{C} \to \mathcal{D}$ which respects underlying sets has a left adjoint.  \qedhere
\end{corollary}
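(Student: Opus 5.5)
The plan is to reduce Corollary~\ref{co:metafreyd} to Corollary~\ref{cor:Freyd} by transporting along equivalences of categories that commute with the forgetful functors to ${\rm Set}$.

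First, for each $\mathcal{C}\in\mathcal{A}$ I would exhibit a variety $\mathcal{V}_{\mathcal{C}}$ together with an isomorphism (or equivalence) $E_{\mathcal{C}}\colon \mathcal{V}_{\mathcal{C}}\to\mathcal{C}$ that preserves underlying sets.
\begin{itemize}
\item ${\rm Set}$, ${\rm Mag}$ and ${\rm BiMag}$ are varieties: ${\rm Set}$ has the empty signature and no identities, ${\rm Mag}$ has one binary operation, and ${\rm BiMag}$ has two.
\item ${}_{\rm YB}{\rm BiMag}$ is cut out of ${\rm BiMag}$ by the identities \eqref{YB1}--\eqref{YB3}. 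The categories ${\rm Mag}^{\rm rP}$ and ${\rm PBiMag}$ are cut out by \eqref{rkm1} and by (PM1)--(PM3), respectively.
\item $\mathcal{YB}$ is isomorphic to ${}_{\rm YB}{\rm BiMag}$ via \eqref{isoybbimag}. $\mathcal{BR}$ is isomorphic to $\mathcal{YB}$ via \eqref{iso_br_yb}. $\mathcal{BLS}$ is isomorphic to ${\rm PBiMag}$ by Theorem~\ref{teorema2}.
\end{itemize}
All of these isomorphisms are the identity on underlying sets and on maps.

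For ${\rm SLB}$ and ${\rm PG}$, I would enlarge the signature by the two inverse operations and the common unit. A skew left brace is then an algebra with operations $\cdot,\ast,(-)^{-1},\overline{(-)},1$ satisfying the group axioms for both structures together with \eqref{compbrac}. Bi-magma morphisms between skew left braces automatically preserve inverses and units, so forgetting the extra operations gives an isomorphism onto the full subcategory ${\rm SLB}\subseteq{\rm BiMag}$. Post-groups are handled analogously, or via the equivalence of Remark~\ref{re:post-groups}.

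Next, let $F\colon\mathcal{C}\to\mathcal{D}$ respect underlying sets. I would form $F' := E_{\mathcal{D}}^{-1}\circ F\circ E_{\mathcal{C}}\colon \mathcal{V}_{\mathcal{C}}\to\mathcal{V}_{\mathcal{D}}$, where $E_{\mathcal{D}}^{-1}$ denotes a quasi-inverse. Because every functor in this composite commutes with the forgetful functors, $F'$ is a functor between varieties that respects underlying sets. By Corollary~\ref{cor:Freyd}, $F'$ has a left adjoint $L'$. Then $E_{\mathcal{C}}\circ L'\circ E_{\mathcal{D}}^{-1}$ is left adjoint to $F$, since equivalences preserve adjunctions. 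When some $E$ is only an equivalence rather than an isomorphism, I would choose the quasi-inverse to commute with the forgetful functors. This is possible here because all our identifications are concrete isomorphisms.

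The main obstacle is the bookkeeping for ${\rm SLB}$ and ${\rm PG}$. There one must check that the enlarged-signature variety is \emph{concretely} isomorphic to the bi-magma subcategory, so that "respects underlying sets" survives the transport. This amounts to verifying that bi-magma morphisms between groups preserve units and inverses. That holds because a multiplicative map between groups is a group homomorphism, and in a skew left brace the two identities coincide.
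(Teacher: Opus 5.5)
Your proposal is correct and follows the paper's route: the paper simply observes that every category in $\mathcal{A}$ is a variety or equivalent to one, and transports Corollary~\ref{cor:Freyd} along these identifications. You additionally insist that the identifications be concrete isomorphisms, so that the hypothesis of respecting underlying sets survives the transport, and you give ${\rm SLB}$ an enlarged signature; both make explicit what the paper leaves implicit, though for ${\rm PG}$ you should prefer the route through the concrete isomorphism with ${\rm SLB}$, because a direct equational presentation would also need a division operation for the (bijective) left translations of the second operation, not just units and inverses.
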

Among several special cases of Corollary~\ref{co:metafreyd} some of them deserve to be stated explicitly and we will do so below. Furthermore, to make them accessible to a wide audience we restate the results in the equivalent language of \emph{universal objects and arrows} \cite[Theorem 2, p. 83]{mcl}. The first application of Corollary~\ref{co:metafreyd} refers to the existence of a \emph{free YB-solution} on any set, i.e. the forgetful functor
\begin{equation*}
\mathcal{YB} \to  {\rm Set} , \qquad (X, \, R)  \mapsto X
\end{equation*}
has a left adjoint $\mathcal{F}^{\rm yb} : {\rm Set} \to \mathcal{YB}$ which is a faithful functor. Equivalently restated, we have:

\begin{corollary}\label{cor:freesolgen}
  For any set $S$, there exists a pair $\Bigl( \bigl(\mathcal{F}^{\rm yb} (S), \, \mathcal{R}_S \bigl), \, i_S \Bigl)$ consisting of a YB-solution $\bigl(\mathcal{F}^{\rm yb} (S), \, \mathcal{R}_S \bigl) \in \mathcal{YB}$ and an injective map $i_{S} : S \to \mathcal{F}^{\rm yb} (S)$ with the following universal property: \\
For any YB-solution $(X, \, R) \in \mathcal{YB}$ and any function $f: S \to X$, there exists a unique morphism $g : \bigl(\mathcal{F}^{\rm yb} (S), \, \mathcal{R}_S \bigl) \to (X, \, R)$ in the category $\mathcal{YB}$ such that the following diagram is commutative:
\begin{equation*}
  \begin{tikzpicture}[>=stealth,auto,baseline=(current  bounding  box.center)]
    \path[anchor=base]
    (0,0) node (l) {$S$}
    +(4,0) node (r) {$\mathcal{F}^{\rm yb} (S)$}
    +(2,-1.5) node (d) {$X$}
    ;
    \draw[->] (l) to[bend left=6] node[pos=.5,auto] {$i_{S}$} (r);
    \draw[->] (l) to[bend right=6] node[pos=.5,auto,swap] {$f $} (d);
    \draw[->] (r) to[bend left=6] node[pos=.5,auto] {$g $} (d);
  \end{tikzpicture}
  \quad \quad \quad {\rm i.e.}\,\,\, g \circ i_{S} = f
\end{equation*}
$\bigl(\mathcal{F}^{\rm yb} (S), \, \mathcal{R}_S \bigl)$ is called the \emph{free YB-solution generated by $S$}.
\end{corollary}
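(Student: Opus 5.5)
The plan is to obtain the free YB-solution as the left adjoint of the forgetful functor $\mathcal{YB}\to{\rm Set}$, transported through the isomorphism (\ref{isoybbimag}) between $\mathcal{YB}$ and the variety ${}_{\rm YB}{\rm BiMag}$. Then $i_S$ is the unit of the adjunction, and the only non-formal point is its injectivity.

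Since ${}_{\rm YB}{\rm BiMag}$ is a variety (two binary operations subject to the identities (\ref{YB1})--(\ref{YB3})), Corollary~\ref{cor:Freyd} gives a left adjoint $\mathcal{F}^{\rm yb}\colon{\rm Set}\to{}_{\rm YB}{\rm BiMag}\cong\mathcal{YB}$ to the forgetful functor. Concretely, $\mathcal{F}^{\rm yb}(S)$ is the free bi-magma on $S$ (non-associative words in two operations $\cdot,\ast$) modulo the smallest congruence containing all instances of (\ref{YB1})--(\ref{YB3}). The solution $\mathcal{R}_S$ is $R_{(\cdot,\ast)}$ for the induced operations, and $i_S$ sends $s$ to the class of the one-letter word $s$. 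The universal property is exactly the unit-of-adjunction reformulation \cite[Theorem 2, p. 83]{mcl}. Given $f\colon S\to X$ with $(X,R)\in\mathcal{YB}$, regard $X$ as a YB bi-magma via (\ref{notatieR}). The unique bi-magma morphism extending $f$ then respects the YB identities, hence factors through the quotient, and morphisms of YB bi-magmas are exactly $\mathcal{YB}$-morphisms.

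It remains to show $i_S$ is injective; this is the main, though mild, obstacle, since Freyd's theorem alone gives no control over the unit. I would exhibit, for $s\neq s'$ in $S$, a YB-solution $(X,R)$ and a map $f\colon S\to X$ with $f(s)\neq f(s')$. Then $g\circ i_S=f$ forces $i_S(s)\neq i_S(s')$. Take $X=S$ with $R={\rm Id}_{X\times X}$, which is a YB-solution by Examples~\ref{exs:solutiiLong}(1), and $f=\id_S$. Equivalently, $\id_S$ factors through $i_S$, so $i_S$ is a split monomorphism in ${\rm Set}$ whenever $S\neq\emptyset$; the case $S=\emptyset$ is trivial.

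Faithfulness of $\mathcal{F}^{\rm yb}$, mentioned before the statement, follows the same way. A left adjoint whose unit components are monomorphisms is faithful: if $\mathcal{F}^{\rm yb}(u)=\mathcal{F}^{\rm yb}(v)$, then $i_T\circ u=i_T\circ v$ by naturality, and since $i_T$ is injective, $u=v$.
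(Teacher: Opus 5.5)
Your proposal is correct and follows the paper's proof: the free YB-solution is the left adjoint to the forgetful functor ${}_{\rm YB}{\rm BiMag}\cong\mathcal{YB}\to{\rm Set}$ supplied by Corollary~\ref{cor:Freyd}, and injectivity of $i_S$ comes from applying the universal property to $(S,{\rm Id}_{S^2})$ and $f={\rm Id}_S$. Your concrete generators-and-relations description matches Remarks~\ref{constrfreemag}(1), and your naturality argument for faithfulness is the content of the lemma the paper cites from \cite{Ago}.
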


\begin{proof}
  The only thing we have to prove is that the unit of adjunction $i_S: S \to \mathcal{F}^{\rm yb} (S)$ is injective. Indeed, by taking the solution $(X, \, R) := (S, \, {\rm Id}_{S^2})$ and $f := {\rm Id}_{S}$ in the universal property above, we obtain that $i_S$ is injective. Thus the left adjoint $\mathcal{F}^{\rm yb} : {\rm Set} \to \mathcal{YB}$ is a faithful functor since the unit components are monomorphisms \cite[Lemma 3.6.6 (1)]{Ago}.
\end{proof}

\begin{remarks}\label{constrfreemag}
  \begin{enumerate}[(1)]
    
  \item Identifying $\mathcal{YB} \cong {}_{\rm YB}{\rm BiMag}$, the explicit construction of $\bigl(\mathcal{F}^{\rm yb} (S), \, \mathcal{R}_S \bigl)$
    for a given set $S$, although it is very technical and difficult to apply, follows a standard recipe from the universal algebra view-point. $\mathcal{F}^{\rm yb} (S)$ is the quotient of the free bi-magma $(\mathcal{F} (S), \, \cdot, \, \ast)$ generated by $S$, through the smallest congruence relation on $\mathcal{F} (S)$ which identifies all the non-associative words given by the equations (\ref{YB1})-(\ref{YB3}). The solution $\mathcal{R}_S$ on $\mathcal{F}^{\rm yb} (S)$ is the canonical one associated with a bi-magma as given by (\ref{notatieR}).
  \item The first application of Corollary~\ref{cor:freesolgen} is typical of a free object in a category. Any YB-solution $(X, \, R)$ is a quotient of the
    free YB-solution on $X$. Indeed, by taking $S := X$ and $f : = {\rm Id}_X$ in Corollary~\ref{cor:freesolgen}, we obtain that there exists a unique morphism $g: \bigl(\mathcal{F}^{\rm yb} (X), \, \mathcal{R}_X \bigl) \to (X, \, R)$ in the category $\mathcal{YB}$ such that $g \circ i_X = {\rm Id}_X$, i.e. $g$ is a surjective map. Moreover, the YB-solution $R$ is recovered as
    \begin{equation}\label{formsolgenfree}
      R = (g \times g) \circ \mathcal{R}_X \circ (i_X \times i_X)
    \end{equation}
    where $i_X : X  \lhook\joinrel\xrightarrow{\quad} \mathcal{F}^{\rm yb} (X)$ is the canonical embedding and $\mathcal{R}_X$ is the free YB-solution generated by the set $X$.

  \item Related to Corollary~\ref{cor:freesolgen} and as another application of Corollary~\ref{co:metafreyd}, it is interesting to point out that the canonical functors
    \begin{equation*}
      {\rm Set} \to \mathcal{YB}, \qquad X \mapsto (X, \, {\rm Id}_{X\times X}), \quad  \bigl( {\rm resp.} \, \,  X \mapsto (X, \, {\rm \tau}_X) \bigl)
    \end{equation*}
    also have left adjoints.

  \item\label{bijeuniv} Usually in the Yang-Baxter literature, only bijective solutions are considered \cite{etingof, LuYZ}. Applying
    Corollary~\ref{co:metafreyd} we obtain that there exists a free bijective YB-solution on any YB-solution $(X, \, R)$. We will elaborate on this statement.

    Let ${\rm Set}^{\bullet}$ be the category of sets with an endomorphism: i.e. objects in ${\rm Set}^{\bullet}$ are pairs $(X, \, f)$, where $X$ is a set and
    $f: X \to X$ a function. A morphism $\alpha : (X, \, f) \to (X', \, f')$ in ${\rm Set}^{\bullet}$ is a function $\alpha : X \to X'$ such that $\alpha \circ f = f' \circ \alpha$. Let ${\rm Set}^{\bullet}_{\rm bij}$ be the full subcategory of ${\rm Set}^{\bullet}$ consisting of all pairs $(X, \, f)$, where $f: X \to X$ is bijective.

    In the same fashion we denote $\mathcal{YB}_{\rm bij}$ the full subcategory of $\mathcal{YB}$ consisting of all pairs $(X, \, R)$, where $X$ is a set and
    $R$ a bijective YB-solution on $X$. These four categories are (isomorphic to) varieties of algebras, thus applying once again Corollary~\ref{co:metafreyd} we obtain that the
    canonical inclusions
    \begin{equation*}
      {\rm Set}^{\bullet}_{\rm bij} \lhook\joinrel\xrightarrow{\quad} {\rm Set}^{\bullet}, \qquad \mathcal{YB}_{\rm bij} \lhook\joinrel\xrightarrow{\quad} \mathcal{YB}
    \end{equation*}
    have left adjoints. In the former case, for instance, for an arbitrary self-map $X\xrightarrow{f}X$, the unit
    \begin{equation*}
      \begin{tikzpicture}[auto,baseline=(current  bounding  box.center)]
        \path[anchor=base]
        (0,0) node (l) {$X$}
        +(2,.5) node (u) {$X$}
        +(2,-.5) node (d) {$Y$}
        +(4,0) node (r) {$Y$}
        ;
        \draw[->] (l) to[bend left=6] node[pos=.5,auto] {$\scriptstyle f$} (u);
        \draw[->] (u) to[bend left=6] node[pos=.5,auto] {$\scriptstyle u$} (r);
        \draw[->] (l) to[bend right=6] node[pos=.5,auto,swap] {$\scriptstyle u$} (d);
        \draw[->] (d) to[bend right=6] node[pos=.5,auto,swap] {$\scriptstyle g$} (r);
      \end{tikzpicture}
    \end{equation*}
    of the adjunction can be described as follows:
    \begin{itemize}
    \item $Y$ is the colimit in ${\rm Set}$ of the diagram
      \begin{equation}\label{eq:xfxfx}
        X
        \xrightarrow{\quad f\quad}
        X\xrightarrow{\quad f\quad} \cdots;
      \end{equation}
    \item $X\xrightarrow{u}Y$ is the structure map of the colimit, identifying $X$ with the leftmost object in (\ref{eq:xfxfx});
    \item $Y\xrightarrow{g}Y$ is the map induced by $f$ via the universal property of the colimit.
    \end{itemize}
    Yet more explicitly, one can recover $Y$ as the union of the tower
    \begin{equation*}
        \overline{X}
        \xrightarrow{\quad \overline{f}\quad}
        \overline{X}\xrightarrow{\quad \overline{f}\quad} \cdots,
      \end{equation*}
      where $X\xrightarrow{} \overline{X}$ is the quotient modulo the equivalence relation identifying $x$ and $y$ whenever $f^{n}(x)=f^{n}(y)$ for sufficiently large $n$ and $\overline{X}\xrightarrow{\overline{f}}\overline{X}$ the map induced by $f$.
  \end{enumerate}
\end{remarks}

A subcategory $\mathcal{C} \subseteq \mathcal{D}$ is \emph{reflective} \cite[\S IV.3]{mcl} if the inclusion functor $\mathcal{C} \lhook\joinrel\xrightarrow{\quad} \mathcal{D}$ has a left adjoint. Extending the second statement of Remarks~\ref{constrfreemag} (\ref{bijeuniv}), and based on Corollary~\ref{co:metafreyd}, we have

\begin{corollary}\label{co:BLSadj}
  Any class $\mathcal{T}$ of YB-solutions definable by equations or division operations involving $R$ is reflective in $\mathcal{YB}$.
  In particular, the subcategory $\mathcal{BLS}\subset \mathcal{YB}$ of all BLS-solutions is reflective.  \qedhere

\end{corollary}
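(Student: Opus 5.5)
The plan is to reduce the statement to Corollary~\ref{co:metafreyd}, i.e.\ to Freyd's theorem in the form of Corollary~\ref{cor:Freyd}, by showing that such a class $\cT$ is itself (isomorphic to) a variety of algebras, and that the inclusion $\cT \hookrightarrow \mathcal{YB}$ respects underlying sets. First identify $\mathcal{YB}$ with the variety ${}_{\rm YB}{\rm BiMag}$ via (\ref{isoybbimag}). A condition on $R$ that is an equation, i.e.\ an identity between composites of the maps $R^{ij}$, $\tau$, projections and diagonals evaluated on tuples of variables, translates, by reading off components through (\ref{notatieR}), into a finite family of identities between bi-magma terms in $\cdot$ and $\ast$. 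This is exactly how (\ref{YB1})--(\ref{YB3}) and, in the proof of Theorem~\ref{teorema2}, the BLS conditions (\ref{KM3aa})--(\ref{KM3cc}) together with the commutativity and cocommutativity identities were obtained. Such a class $\cT$ is then a subvariety of ${}_{\rm YB}{\rm BiMag}$.

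For conditions of \emph{division} type, such as bijectivity of $R$ or the left/right quasi-group conditions of Remark~\ref{res:multenon}, enlarge the signature. Add new binary operations (for example left and right divisions $\backslash$, $/$ for a quasi-group, or two binary operations encoding the components of $R^{-1}$ for bijectivity). Impose the identities saying that they invert the relevant maps, e.g.\ $x\cdot(x\backslash y)=y=x\backslash(x\cdot y)$, or $R\circ R^{-1}=\id=R^{-1}\circ R$ written componentwise. As in the classical passage from quasi-groups to equational quasi-groups, the resulting variety $\mathcal{V}$ has an underlying-set-preserving forgetful functor to $\mathcal{YB}$ which is an isomorphism onto $\cT$. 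The point to check is fullness: any $\mathcal{YB}$-morphism between objects of $\cT$ automatically preserves the added operations, since these are uniquely determined by $\cdot,\ast$. The argument is the uniqueness argument used in the proof of Proposition~\ref{cyc_iso} for $a(x,x')$ and $b(x,x')$, and the analogous one for $R^{-1}$.

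Given this, the inclusion $\cT\cong\mathcal{V}\hookrightarrow {}_{\rm YB}{\rm BiMag}\cong\mathcal{YB}$ is a functor between varieties that respects underlying sets. By Corollary~\ref{cor:Freyd} it has a left adjoint, so $\cT$ is reflective. For $\mathcal{BLS}$, Theorem~\ref{teorema2} already exhibits it as isomorphic to ${\rm PBiMag}$, a subvariety of ${\rm BiMag}$ (and of ${}_{\rm YB}{\rm BiMag}$, by Remarks~\ref{longbefore}), so the particular case follows at once.

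The main obstacle is making precise what ``definable by equations or division operations'' means, so that the translation into identities is legitimate. I would fix it as: conditions expressible as identities among term-maps $X^n\to X^m$ built from $R$, where in the division case the adjoined operations are required to be uniquely determined by $R$. Uniqueness is what secures fullness and hence the isomorphism with a variety.
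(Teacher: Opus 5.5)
Your proposal is correct and follows essentially the paper's route. The paper gives no separate argument beyond observing that such classes are varieties, or isomorphic to varieties once division operations are added, so that Corollary~\ref{co:metafreyd} applies; the $\mathcal{BLS}$ case then follows from Theorem~\ref{teorema2} and the fact that ${\rm PBiMag}$ is a variety. Your enlargement of the signature and your fullness check simply make that appeal explicit. One point of phrasing in your last paragraph: fullness comes from the \emph{pointwise} uniqueness of solutions to the defining equations, as in the proof of Proposition~\ref{cyc_iso}, not from uniqueness of the added structure as such. For example, the unit of a unital magma is unique, yet magma morphisms between unital magmas need not preserve it.
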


\begin{remark}\label{re:aproximarea}
  The adjoint functors of Corollary~\ref{co:BLSadj} can be constructed by the generators-and-relations procedure \cite[Theorem 8.14]{bergman} familiar from universal algebras. To summarize: for a given YB-solution
  $(X, \, R)$, one takes the quotient of the associated Yang-Baxter bi-magma $(X, \, \cdot, \, \ast)$ through the smallest (finest) congruence containing the relations on $X$ which identifies all non-associative words corresponding to the equations defining the class $\cT$. \\
The units of the adjoint pairs of functors from Corollary~\ref{co:BLSadj} give the \emph{best approximation} of an arbitrary YB-solution $(X, \, R)$ with a YB-solution of class $\cT$.  
\end{remark}
As noted at the beginning of this section, one of the very few results in the Yang-Baxter literature in the spirit of Corollary~\ref{co:metafreyd} and Corollary~\ref{co:BLSadj} is \cite[Theorem 9]{LuYZ} which was established for \emph{braiding operators} on groups. An equivalent formulation in the language of skew left braces appears in \cite[Theorem 4.5]{smo_ven} and the same result was later proved in the setting of post-groups in \cite[Theorem 3.20]{bai_guo}. As skew left braces have already appeared above, we adopt this framework to restate \cite[Theorem 4.5]{smo_ven} in terms of adjoint functors:

\begin{theorem}
The functor $\mathcal{S}: {\rm SLB} \to \mathcal{BR}^{b,n}$ which sends any skew left brace $(X, \, \cdot, \, \ast)$ to the corresponding bijective and (left-right) non-degenerate solution $(X, \, \mathcal{R}^{(\cdot, \, \ast)})$ of the braid equation, has a left adjoint.
\end{theorem}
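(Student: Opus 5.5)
The plan is to deduce the statement from Freyd's theorem (Corollary~\ref{cor:Freyd}). The catch is that $\mathcal{BR}^{b,n}$ is not literally a variety of algebras, since bijectivity and non-degeneracy are existence statements about inverses. So the first step is to make both $\mathcal{BR}^{b,n}$ and ${\rm SLB}$ into varieties, or categories isomorphic to varieties, in a way that keeps underlying sets and morphisms.

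For ${\rm SLB}$ I would use the signature $(\cdot,\ast,{}^{-1},\overline{\,\cdot\,},1)$: two group structures sharing the unit, together with the identity (\ref{compbrac}). Group homomorphisms automatically preserve inverses and units, so this variety has the same morphisms as the full subcategory ${\rm SLB}\subseteq{\rm BiMag}$.

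For $\mathcal{BR}^{b,n}$ I would enlarge the signature of bi-magmas $(X,\cdot,\ast)$, where $R(x,y)=(x\cdot y,x\ast y)$, as follows:
\begin{itemize}
\item two more binary operations encoding $R^{-1}$, with the identities $R^{-1}R=RR^{-1}={\rm Id}$ written componentwise;
\item a left division $\backslash$ for the left quasi-group $(X,\cdot)$, with $x\cdot(x\backslash y)=y=x\backslash(x\cdot y)$;
\item a right division $/$ for the right quasi-group $(X,\ast)$, with the analogous identities;
\item the braid identities, i.e.\ the analogues of (\ref{YB1})--(\ref{YB3}) transported through (\ref{iso_br_yb}).
\end{itemize}
Inverses and divisions are unique when they exist, and any map intertwining $R$ also intertwines $R^{-1}$ and the divisions (the uniqueness argument used in the proof of Proposition~\ref{cyc_iso}). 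Hence the forgetful functor from this variety to $\mathcal{BR}^{b,n}$ is an isomorphism of categories that preserves underlying sets.

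Next I check that $\mathcal{S}$, composed with these identifications, is a functor between varieties that respects underlying sets. This holds because $\mathcal{S}(\sigma)=\sigma$ on underlying maps, and $\mathcal{S}$ is a functor by (\ref{sol_skew_br}). Corollary~\ref{cor:Freyd} (equivalently Corollary~\ref{co:metafreyd}) then gives a left adjoint. Adjoints transport along isomorphisms of categories, so $\mathcal{S}$ itself has a left adjoint.

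The main obstacle is the first step: checking that the non-degeneracy and bijectivity data really are equationally definable and that the extra operations are respected by all morphisms, so that the isomorphism with a variety is honest. I would handle it by the uniqueness argument above. If a more concrete description is wanted, I would also note that the left adjoint sends $(X,R)$ to the skew left brace presented by generators $X$ modulo the relations $\mathcal{R}^{(\cdot,\ast)}(x,y)=R(x,y)$, as in Remark~\ref{re:aproximarea}. This recovers \cite[Theorem 4.5]{smo_ven} and shows that bijectivity and non-degeneracy are not needed for existence, since the same argument works with $\mathcal{BR}$ in place of $\mathcal{BR}^{b,n}$.
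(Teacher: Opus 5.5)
Your argument is correct and rests on the same tool as the paper's Freyd-based argument, Corollary~\ref{cor:Freyd}. The difference is the codomain you apply it to.

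For this exact statement the paper gives no new proof; it attributes it to the constructive argument of Lu--Yan--Zhu. It then applies Freyd to $\mathcal{S}\colon {\rm SLB}\to\mathcal{BR}$. Here $\mathcal{BR}\cong\mathcal{YB}\cong{}_{\rm YB}{\rm BiMag}$ is already a variety in the bare signature of two binary operations, and it belongs to the list $\mathcal{A}$ of Corollary~\ref{co:metafreyd}; $\mathcal{BR}^{b,n}$ does not. This produces a left adjoint $\mathcal{B}\colon\mathcal{BR}\to{\rm SLB}$.

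The stated theorem then follows by restricting $\mathcal{B}$ to the full subcategory $\mathcal{BR}^{b,n}$, which contains the image of $\mathcal{S}$. Indeed, for $Y\in\mathcal{BR}^{b,n}$ one has $\mathrm{Hom}_{\mathcal{BR}^{b,n}}(Y,\mathcal{S}B)=\mathrm{Hom}_{\mathcal{BR}}(Y,\mathcal{S}B)\cong\mathrm{Hom}_{\rm SLB}(\mathcal{B}Y,B)$ naturally.

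On that route your ``main obstacle'' never arises. No inverse or division operations need to be added, and no uniqueness argument is needed. It is therefore the shorter proof of this statement; your closing remark about $\mathcal{BR}$ already contains it, which makes the detour through $\mathcal{BR}^{b,n}$ redundant.

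What your route does buy is an independently useful fact. Encoding $R^{-1}$ and the two divisions equationally, and checking by uniqueness that every morphism preserves them, shows that $\mathcal{BR}^{b,n}$ is itself isomorphic to a variety. Hence it is complete and cocomplete, and, by the same Freyd argument in the spirit of Corollary~\ref{co:BLSadj}, reflective in $\mathcal{BR}$.
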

The proof of this result, originally given in \cite[Theorem 9]{LuYZ}, is constructive and does not rely on Freyd's theorem. Consequently, the construction which gives the left adjoint functor is rather involved. Using Corollary~\ref{co:metafreyd} we strengthen the above result by showing that the additional assumptions of bijectivity and left-right non-degeneracy, which were used in the aforementioned proof, are in fact unnecessary. Indeed, since ${\rm SLB}$ and $\mathcal{BR}$ are (isomorphic to) varieties of algebras, the functor
\begin{equation}\label{brabradi}
  \mathcal{S}: {\rm SLB} \to \mathcal{BR}, \qquad \mathcal{S} \bigl ( (X, \, \cdot, \, \ast) \bigl) := (X, \, \mathcal{R}^{(\cdot, \, \ast)}), \quad \mathcal{S} (\sigma) := \sigma
\end{equation}
(which respects underlying sets) has a left adjoint $\mathcal{B}: \mathcal{BR} \to {\rm SLB}$. We restate this result in terms of universal objects and arrows as follows:

\begin{corollary}\label{cor:primaadjreform}
  Let $(X, \, R) \in \mathcal{BR}$ be an arbitrary solution of the braid equation. Then there exists a pair $\bigl(\mathcal{B} (X, \, R), \, \eta_{(X,\,R)}  \bigl)$ consisting of a skew left brace $\bigl(\mathcal{B}(X, \, R), \, \bullet , \odot  \bigl)$ and a morphism $\eta_{(X, \, R)}: (X, \, R) \to \bigl(\mathcal{B} (X, \, R), \, \mathcal{R}^{(\bullet , \odot  \bigl)}  \bigl)$ in the category $\mathcal{BR}$ with the following property: \\
For any skew left brace $(B, \, \cdot, \, \ast)$ and any morphism $\theta : (X, \, R) \to \bigl(B, \, \mathcal{R}^{(\cdot, \, \ast)} \bigl)$ in $\mathcal{BR}$,
  there exists a unique morphism of skew left braces $u: \bigl ( \mathcal{B} (X, \, R), \, \bullet , \odot  \bigl) \to (B, \, \cdot, \, \ast)$ such that the
  following diagram is commutative:
  \begin{equation*}
    \begin{tikzpicture}[>=stealth,auto,baseline=(current  bounding  box.center)]
      \path[anchor=base]
      (0,0) node (l) {$X$}
      +(4,0) node (r) {$\mathcal{B}(X, \, R)$}
      +(2,-1.5) node (d) {$B$}
      ;
      \draw[->] (l) to[bend left=6] node[pos=.5,auto] {$\eta_{(X,\,R)}$} (r);
      \draw[->] (l) to[bend right=6] node[pos=.5,auto,swap] {$\theta $} (d);
      \draw[->] (r) to[bend left=6] node[pos=.5,auto] {$u $} (d);
    \end{tikzpicture}
    \quad \quad \quad {\rm i.e.} \,\,\, u \circ \eta_{(X,\,R)} = \theta
  \end{equation*}
\end{corollary}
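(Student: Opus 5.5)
The plan is to deduce the statement directly from Corollary~\ref{co:metafreyd}, applied to the functor $\mathcal{S}\colon {\rm SLB}\to\mathcal{BR}$ of (\ref{brabradi}), and then to unpack the resulting adjunction as a universal arrow in the sense of \cite[Theorem 2, p. 83]{mcl}.

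\textbf{Step 1: $\mathcal{S}$ is a well-defined functor into $\mathcal{BR}$.} On objects, $\mathcal{R}^{(\cdot,\ast)}$ is a solution of the braid equation by \cite[Theorem 3.1]{vendramin1}. On morphisms, a morphism of skew left braces preserves both group operations and their inverses. Since $\mathcal{R}^{(\cdot,\ast)}$ is built from these by the formula (\ref{solbrac}), such a morphism intertwines the induced solutions, exactly as observed after (\ref{sol_skew_br}). The functor is the identity on underlying sets and on maps, so it respects underlying sets.

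\textbf{Step 2: both categories are (equivalent to) varieties.} $\mathcal{BR}$ is isomorphic to $\mathcal{YB}$ via (\ref{iso_br_yb}), and $\mathcal{YB}$ is isomorphic to the variety ${}_{\rm YB}{\rm BiMag}$. Concretely, the braid equation transported to bi-magmas is a set of identities analogous to (\ref{YB1})--(\ref{YB3}).

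${\rm SLB}$ is a variety once we enlarge the signature. Take two binary operations, two unary inverses and a constant. The axioms are the group axioms for both structures, the requirement that the two units coincide \cite[Lemma 1.7 (1)]{vendramin1}, and the identity (\ref{compbrac}). In this signature, morphisms are exactly maps preserving both multiplications, because group homomorphisms automatically preserve inverses and units. So the category is isomorphic to ${\rm SLB}$ as a full subcategory of ${\rm BiMag}$, compatibly with underlying sets.

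I expect this to be the only point requiring care: Corollary~\ref{cor:Freyd} needs genuine varieties, and the set-preserving isomorphisms must be checked so that the composite functor between varieties still respects underlying sets. Equivalences transport left adjoints, so Corollary~\ref{co:metafreyd} then gives a left adjoint $\mathcal{B}\colon\mathcal{BR}\to{\rm SLB}$.

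\textbf{Step 3: unpack the adjunction.} Let $\eta$ be the unit of $\mathcal{B}\dashv\mathcal{S}$. Its component $\eta_{(X,R)}\colon (X,R)\to \mathcal{S}\mathcal{B}(X,R)$ is a morphism in $\mathcal{BR}$, and $\mathcal{S}\mathcal{B}(X,R)=\bigl(\mathcal{B}(X,R),\mathcal{R}^{(\bullet,\odot)}\bigr)$, where $\bullet,\odot$ denote the brace operations on $\mathcal{B}(X,R)$.

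By \cite[Theorem 2, p. 83]{mcl}, the unit components are universal arrows from $(X,R)$ to $\mathcal{S}$. That is, every $\theta\colon (X,R)\to \mathcal{S}(B,\cdot,\ast)$ factors uniquely as $\mathcal{S}(u)\circ\eta_{(X,R)}$ with $u$ a morphism of skew left braces. Since $\mathcal{S}(u)=u$ as a map, this is precisely the commutative triangle $u\circ\eta_{(X,R)}=\theta$ in the statement.
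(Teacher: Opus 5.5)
Your proposal is correct and follows the paper's route: regard ${\rm SLB}$ and $\mathcal{BR}\cong\mathcal{YB}\cong{}_{\rm YB}{\rm BiMag}$ as (isomorphic to) varieties, and apply Freyd's theorem (Corollary~\ref{co:metafreyd}) to the underlying-set-preserving functor $\mathcal{S}\colon{\rm SLB}\to\mathcal{BR}$ of (\ref{brabradi}). You then read off the unit of $\mathcal{B}\dashv\mathcal{S}$ as a universal arrow via Mac Lane's theorem. Your explicit signature for ${\rm SLB}$ (both inverses and a common unit, so that the components of $\mathcal{R}^{(\cdot,\ast)}$ become genuine terms) spells out a detail the paper leaves implicit.
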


\begin{remark}\label{re:chivendcazspecial}
The pair of adjoint functors $\mathcal{B} \dashv \mathcal{S} $ from Corollary~\ref{cor:primaadjreform} is far from yielding an equivalence of categories: Proposition \ref{prop_im} shows that
the functor $\mathcal{S}: {\rm SLB} \to \mathcal{BR}$ is not essentially surjective, even after restricting its codomain to the subcategory $\mathcal{BR}^{b,n}$. Furthermore,
the functor $\mathcal{B}: \mathcal{BR} \to {\rm SLB}$ is not faithful. Indeed, \cite[p. 17]{LuYZ}, provides an example of a solution $(X, R)$ of the braid equation for which the unit
$\eta: (X, \, R) \to \bigl(\mathcal{B} (X, \, R), \, \mathcal{R}^{(\bullet , \odot )} \bigl)$ of the adjunction $\mathcal{B} \dashv \mathcal{S}$ is not injective. This shows that $\mathcal{B}$ cannot be faithful \cite[Lemma 3.6.6 (1)]{Ago}.
\end{remark}

%%%%%%%%%%%%%%%%%%%%%%%%%%%%%%%%%%%%%%%%%%%%%%%%%%%%%%%%%%%%%%%%
%%%%%%%%%%%%%%%%%%%%%%%%%%%%%%%%%%%%%%%%%%%%%%%%%%%%%%%%%%%%%%%%

\section{Concluding remarks and open questions}\label{se:conclusion}

As explained in Section~\ref{se:prel}, the two magmas $(X, \, \cdot)$ and $(X, \, \ast)$ underlying a bi-magma $(X, \, \cdot, \, \ast)$ can independently be required to be a left quasi-group, a right quasi-group, or a quasi-group. The existing literature focuses almost exclusively on the study of  \emph{left-right non-degenerate} solutions of the braid equation \cite{etingof}, assuming that $(X, \, \cdot)$ is a left quasi-group and $(X, \, \ast)$ is a right quasi-group. Given the categorical isomorphism (\ref{iso_br_yb}), the analogous condition imposed in the study of the Yang-Baxter equation is the \emph{right-left non-degeneracy}, see e.g. \cite{rump}. Notably, the one-sided notion of left non-degeneracy, requiring only $(X, \, \cdot)$ to be a right quasi-group, was considered in \cite{rump} for the Yang-Baxter equation. However, various forms of \emph{non-degeneracy} remain unexplored, leaving the following question widely open: \vspace{0.5cm}\\
{\bf Problem C}:  \emph{Study solutions of the braid (resp. Yang--Baxter) equation satisfying other possible combinations of quasi-group conditions: for instance, requiring both $(X, \, \cdot)$ and $(X, \, \ast)$ to be left or right quasi-groups, or requiring one or both operations to be quasi-groups.}\vspace{0.5cm}\\
For each of these alternative non-degeneracy conditions, the two-step program proposed in this paper can be pursued:
\begin{enumerate}[1.]
\item Identify an algebraic structure, analogous to cycle sets or skew left braces, whose associated variety maps functorially onto the corresponding subcategory of $\mathcal{BR}$ (resp. $\mathcal{YB}$).
\item Determine the properties of this functor; that is, whether it captures the \emph{entire} subcategory of solutions satisfying the given non-degeneracy condition, or only a \emph{proper} subcategory thereof.
\end{enumerate}
 A closely related instance of Problem~B that Section~\ref{se:freyd} leaves open concerns the functor
$$
\mathcal{S}: {\rm SLB} \to \mathcal{BR}^{b,n}
$$
defined in~(\ref{sol_skew_br}). Proposition~\ref{prop_im} shows that the essential image of $\mathcal{S}$ is a \emph{proper} subcategory of $\mathcal{BR}^{b,n}$, by providing an example of a solution in $\mathcal{BR}^{b,n}$ which is not isomorphic to a solution induced by a skew left brace. However, this gives no information about which bijective and (left-right) non-degenerate solutions arise from a skew left brace. It would be worthwhile to determine, in terms of the associated bi-magma $(X, \, \cdot, \, \ast)$, necessary and sufficient conditions for $R \in \mathcal{BR}^{b,n}$ to be isomorphic to some $\mathcal{R}^{(\cdot', \, \ast')}$.

\addcontentsline{toc}{section}{References}
%\bibliography{bib}{}
%\bibliographystyle{plain}

\Addresses

\end{document}